\numberwithin{equation}{section}
\newtheorem{theorem}{Theorem}[section]
\newtheorem{lemma}[theorem]{Lemma}
\newtheorem{remark}[theorem]{Remark}
\theoremstyle{definition}
\newtheorem{definition}[theorem]{Definition}
\newtheorem{example}[theorem]{Example}
\newcommand{\dd}{\,\mathrm{d}}
\renewcommand{\d}{\mathrm{d}}
\newcommand{\R}{\mathbb{R}}
\newcommand{\N}{\mathbb{N}}
\newcommand{\1}{\mathbf{1}}
\newcommand{\X}{{\bf X}}
\newcommand{\var}{\text{-}\mathsf{var}}
\newcommand{\vertiii}[1]{{\left\vert\kern-0.25ex\left\vert\kern-0.25ex\left\vert #1 \right\vert\kern-0.25ex\right\vert\kern-0.25ex\right\vert}}
\newcommand{\Tc}{\mathcal{T}}
\title{A Sobolev rough path extension theorem via regularity structures}
\author[Liu]{Chong Liu}
\address{Chong Liu, ShanghaiTech University, China}
\email{liuchong@shanghaitech.edu.cn}
\author[Pr{\"o}mel]{David J. Pr{\"o}mel}
\address{David J. Pr{\"o}mel, Universit{\"a}t Mannheim, Germany}
\email{proemel@uni-mannheim.de}
\author[Teichmann]{Josef Teichmann}
\address{Josef Teichmann, Eidgen{\"o}ssische Technische Hochschule Z{\"u}rich, Switzerland}
\email{josef.teichmann@math.ethz.ch}
\date{\today}
\begin{document}

\begin{abstract}
  We show that every $\R^d$-valued Sobolev path with regularity~$\alpha$ and integrability~$p$ can be lifted to a Sobolev rough path provided $1/2 >\alpha > 1/p \vee 1/3$. The novelty of our approach is its use of ideas underlying Hairer's reconstruction theorem generalized to a framework allowing for Sobolev models and Sobolev modelled distributions. Moreover, we show that the corresponding lifting map is locally Lipschitz continuous with respect to the inhomogeneous Sobolev metric.
\end{abstract}

\maketitle

\noindent\textbf{Key words:} fractional Sobolev space, Lyons--Victoir extension theorem, reconstruction theorem, regularity structures, rough path. \\
\textbf{MSC 2020 Classification:} 60L20, 60L30.

% Classification
% --------------
% 60L20 Rough analysis (Rough paths)
% 60L30 Rough analysis (Regularity structures)

%\tableofcontents

\section{Introduction}

The cornerstone of rough path theory is the concept of a rough path in the sense of Lyons~\cite{Lyons1998}. Unlike a classical path~$X$ from an interval~$[0,T]$ to the Euclidean space~$\R^d$, a rough path $\mathbf{X}=(X,\mathbb{X})$ contains the additional information~$\mathbb{X}$ representing, loosely speaking, the iterated integrals of the path~$X$ against itself. However, at least in general, there is no simple canonical way to ensure the existence of these iterated integrals. This has led to the fundamental question whether every $\R^d$-valued path~$X$ can be lifted to a rough path~$\X$ in the sense that the projection of~$\X$ onto the path-level\footnote{That is, the first coordinate projection $\X = (X, \mathbb{X}) \mapsto X$.} is~$X$.

A first affirmative and non-trivial\footnote{Of course, for a smooth path a rough path lift can be constructed by, e.g., Riemann--Stieltjes integration.} answer was given by Lyons and Victoir~\cite{Lyons2007a}, proving, in particular, that an $\R^d$-valued H{\"o}lder continuous path can always be lifted to a H{\"o}lder continuous weakly geometric rough path. Using a re-parameterization argument, this directly reveals an extension theorem in terms of $p$-variation. While proof of Lyons and Victoir is considered to be non-constructive, an explicit approach, based on so-called Fourier normal ordering, was obtained by Unterberger~\cite{Unterberger2010}. More recently, further constructive proofs of the Lyons--Victoir extension theorem were derived by Broux and Zambotti~\cite{Broux2021}, using a sewing lemma for low regularity, and by Tapia and Zambotti~\cite{Tapia2020}, using an explicit form of the Baker--Campbell--Hausdorff formula. Notice that in \cite{Tapia2020} a Lyons--Victoir extension theorem is provided allowing even for anisotropic H{\"o}lder continuous paths, i.e., allowing each component of the underlying path to have a different H{\"o}lder regularity. Moreover, in~\cite{Liu2018} a Lyons--Victoir extension theorem for Sobolev paths is proven, using a discrete characterization of (non-linear) fractional Sobolev spaces.

The present work is a companion paper of \cite{Liu2018} and continues the above line of research. We explore an approach based on Hairer's theory of regularity structures~\cite{Hairer2014}, which goes back to \cite{Friz2014}, and show that every path with Sobolev regularity $\alpha \in (1/3,1/2)$ and integrability~$p>1/\alpha$ can be lifted to a weakly geometric rough path possessing exactly the same Sobolev regularity. While the rough path lift of a H{\"o}lder continuous path is a known and fairly simple application of Hairer's reconstruction theorem (\cite[Theorem~3.10]{Hairer2014}), see \cite[Proposition~13.23]{Friz2014} or \cite{Brault2019}, lifting a Sobolev path lies outside the current framework of regularity structures and thus requires some serious additional effort. Indeed, we need to use a Sobolev topology on the space of modelled distributions, as introduced in \cite{Hairer2017} and \cite{Liu2016} (see also \cite{Hensel2020}) and additionally to generalize the definition of models from the originally required H{\"o}lder bounds to some more general Sobolev bounds. In other words, we cannot apply Hairer's reconstruction theorem directly and instead need to generalize the essential features of Hairer's reconstruction operator to our setting allowing for Sobolev models and Sobolev modelled distributions, see Remark~\ref{rmk:new bounds} for a more detailed discussion. The rough path lift essentially relying on Hairer's reconstruction operator constitutes an explicit construction of a rough path above a given $\R^d$-valued path. In contrast to the rough path lift obtained in the spirit of Lyons and Victoir, the corresponding lifting map turns out to be continuous, cf. \cite[Proposition~13.23]{Friz2014} or \cite{Brault2019}. Indeed, we prove that the corresponding lifting map is locally Lipschitz continuous with respect to the inhomogeneous Sobolev metric.

\smallskip
\noindent{\bf Organization of the paper:} In Section~\ref{sec:Sobolev rough path} we introduce the notion of a rough path and some basic definitions. In Section~\ref{sec:lifting} we construct the rough path lifts of Sobolev paths and show that the corresponding lifting map is locally Lipschitz continuous.

\section{Sobolev rough path and basic notation}\label{sec:Sobolev rough path}

We start by introducing the notion of Sobolev rough paths in Subsection~\ref{subsec:Sobolev rough path} and some basic definitions in Subsection~\ref{subsec:basic notation}. 

\subsection{Sobolev rough path}\label{subsec:Sobolev rough path}

Since we focus throughout the entire work on the Sobolev regularity $\alpha \in (1/3,1/2)$, we only present the definitions below in the necessary generality to deal with this regularity. A more general treatment of Sobolev rough paths can be found in \cite{Liu2018,Liu2021} and for more comprehensive introduction to rough path theory, see e.g. \cite{Lyons2007,Friz2010,Friz2014}.

We first recall the underlying algebra structure of a rough path, which can be conveniently described by the free nilpotent Lie group $G^2(\R^d)$. Let $\R^d$ be the Euclidean space with norm~$| \cdot |$ for $d\in \N$ and let $C^{1\var}([0,T];\mathbb{R}^d)$ be the space of all continuous functions $Z\colon [0,T]\to \R^d$ of finite variation. For a path $Z\in C^{1\var}([0,T];\mathbb{R}^d)$, its step-$2$ signature is defined by 
\begin{align*}
  S_2(Z)_{s,t}
  :=\bigg (1, \int_{s<u<t}\dd Z_u, \int_{s<u_1<u_2<t}\dd Z_{u_1} \otimes \d Z_{u_2} \bigg) 
  \in T^2(\mathbb{R}^d):= \bigoplus_{k=0}^2 \big(\mathbb{R}^d\big)^{\otimes k},
\end{align*}
where $\big(\mathbb{R}^d\big)^{\otimes n}$ denotes the $n$-tensor space of $\mathbb{R}^d$ with the convention $(\R^d)^{\otimes 0}:=\mathbb{R}$, cf. \cite[Definition~7.2]{Friz2010}. We equip $T^2(\R^d)$ with the standard addition $+$, tensor multiplication~$\otimes$ and scalar product, and denote by~$\pi_i$ the projection from $T^2(\mathbb{R}^d)$ onto the $i$-th level, for $i=0,1,2$. The corresponding space of all these lifted paths is the step-$2$ free nilpotent group (w.r.t. $\otimes$) 
\begin{equation*}
  G^2(\mathbb{R}^d) := \{S_2(Z)_{0,1} \,:\, Z\in C^{1\var}([0,T];\mathbb{R}^d)\}\subset T^2(\mathbb{R}^d).
\end{equation*}
On $G^2(\mathbb{R}^d)$ we work with the Carnot--Caratheodory metric $d_{cc}$, which is given by
\begin{equation*}
  d_{cc}(g,h) := \|g^{-1} \otimes h\|_{cc}  \quad \text{for} \quad  g,h \in G^2(\R^d),
\end{equation*}
where $\| \cdot \|_{cc} $ is the Carnot--Caratheodory norm defined via \cite[Theorem~7.32]{Friz2010}, cf. \cite[Definition~7.41]{Friz2010}. The metric $d_{cc}$ turns $G^2(\R^d)$ into a complete geodesic metric space. For a path $\X\colon [0,T]\to G^2(\R^d)$, we set $\X_{s,t} := \X_s^{-1} \otimes \X_t$ for any subinterval $[s,t] \subset [0,T]$. We refer to \cite[Chapter~7]{Friz2010} for a more comprehensive introduction to $G^N(\R^d)$. 

\medskip

Analogously to \cite{Liu2018,Liu2021}, we want to consider rough paths with fractional Sobolev regularity. For this purpose, let us recall the definition of Sobolev regularity for functions mapping into a metric space~$(E,\textbf{d})$. For $\alpha \in (0,1)$, $p \in (1,+\infty)$ and a (continuous) function $f\colon [0,T]\to E$ we define the \textit{fractional Sobolev regularity} by
\begin{equation}\label{eq: fractional Sobolev norm}
 \|f\|_{W^{\alpha}_p}:= \|f\|_{W^{\alpha}_p;[0,T]}:= \bigg( \iint_{[0,T]^2} \frac{\textbf{d}(f(u),f(v))^p}{|v-u|^{\alpha p+1}} \dd u \dd v\bigg)^{1/p} + \bigg( \int_{[0,T]} \textbf{d}(x_0, f(u))^p \dd u \bigg)^{1/p}
\end{equation} 
and in the case of $p=+\infty$ we set 
\begin{equation*}
  \|f\|_{W^{\alpha}_p}:= \|f\|_{W^{\alpha}_p;[0,T]}:= \sup_{u,v\in[0,T],\,} \frac{\textbf{d}(f(u),f(v))}{|v-u|^\alpha} + \sup_{u \in [0,T]} \textbf{d}(x_0,f(u))
\end{equation*} 
for an arbitrary $x_0 \in E$. The latter case is also known as H{\"o}lder regularity. The space $W^{\alpha}_p([0,T];E)$ consists of all continuous functions $f\colon [0,T]\to E$ such that $\|f\|_{W^{\alpha}_p}<+\infty$. Notice that the fractional Sobolev space $W^{\alpha}_p([0,T];E)$ is independent of the reference point~$x_0$. The Sobolev regularity leads naturally to the notion of (fractional) Sobolev rough paths.

\begin{definition}[Sobolev rough path]\label{def:Sobolev rough path}
  Let $\alpha \in (1/3,1/2)$ and $p \in (1,+\infty]$ be such that $\alpha > 1/p$. The space $W^{\alpha}_p([0,T];G^{2}(\mathbb{R}^d))$ consists of all paths $\X\colon[0,T]\to G^{2}(\mathbb{R}^d) $ such that 
  \begin{equation*}
    \| \X \|_{W^{\alpha}_p} := \Big(\iint_{[0,T]^2}\frac{d_{cc}(\X_s,\X_t)^p}{|t-s|^{\alpha p + 1}} \dd s \dd t\Big)^{1/p} <+\infty.
  \end{equation*}
  The space $W^{\alpha}_p([0,T];G^{2}(\mathbb{R}^d))$ is called the \textit{weakly geometric Sobolev rough path space} and $\X\in W^{\alpha}_p([0,T];G^{2}(\mathbb{R}^d))$ is called a \textit{weakly geometric rough path of Sobolev regularity} $(\alpha,p)$ or in short \textit{Sobolev rough path}. 
\end{definition}

\begin{remark}
  Suppose that $\alpha \in (1/3,1/2)$ and $p \in (1,+\infty]$ be such that $\alpha > 1/p$, as done in Definition~\ref{def:Sobolev rough path}. By \cite[Theorem~2]{Friz2006}, every weakly geometric rough path of Sobolev regularity $(\alpha,p)$ is a continuous weakly geometric rough path of finite $1/\alpha$-variation. Therefore, the definition of the Sobolev regularity $\| \X \|_{W^{\alpha}_p}$ for a (continuous) rough path $\X$ does not require the additional $L^p$-regularity and, thus, we did drop it in definition of $\| \X \|_{W^{\alpha}_p}$, following the previous definitions of Sobolev rouh paths, see e.g. \cite[Definition~2.2]{Liu2021}.
\end{remark}

\subsection{Basic notation and function spaces}\label{subsec:basic notation}

As usual, $\mathbb{Z}$ denotes the set of integers, $\N :=\{1,2,\dots \}$ are the natural numbers and we set $\N_0:=\N \cup \{0\}$. For $z \in \R$ we write $\lfloor z \rfloor:= \max \{ y\in \mathbb{Z} \,:\, y\leq z\}$. The ball in $\mathbb{R}^{k}$, around $x\in\mathbb{R}^{k}$ with radius $R>0$ is denoted by $B(x,R)$. For two real functions $a,b$ depending on variables $x$ one writes $a\lesssim b$ or $a\lesssim_z b$ if there exists a constant $C(z)>0$ such that $a(x) \leq C(z)\cdot b(x)$ for all $x$, and $a\sim b$ if $a\lesssim b$ and $b\lesssim a$ hold simultaneously.

The space $L^{p}:=L^{p}(\mathbb{R}^{k},\d x)$, $p\geq1$, is the Lebesgue space, that is, the space of all functions~$f$ such that $\int_{\mathbb{R}^{k}}|f(x)|^{p}\dd x<+\infty$. We also set $L^q_\lambda := L^q((0,1),\lambda^{-1}\d \lambda)$ for $q\geq 1$ and write $L^{p}(\mathbb{R}^{k};B)$ for the $L^p$-space of functions $f\colon \R^k\to B$ where $B$ is a Banach space. The notation $\langle f, g\rangle$ is used for the $L^2$-inner product of $f$ and $g$ as well as the evaluation of the distribution~$f$ against the test function~$g$. 

The space $\ell^p$ is the Banach space of all sequences $(x_n)_{n\in \N}$ of real numbers such that $\sum_{n\in \N}|x_n|^p<+\infty$ and the corresponding norm is denoted by $\|\cdot \|_{\ell^p}$. The space $\ell^p_n$, for $n\in \N$, is the Banach space of all sequences $u(x) \in \R$, $x\in \Lambda_n :=  \{ 2^{-n }k\,:\, k \in \mathbb{Z} \}$, such that 
\begin{equation*}
  \|u(x)\|_{\ell^p_n} := \bigg(\sum_{x\in \Lambda_n }2^{-n d} |u(x)|^p\bigg)^{1/p}<+\infty.
\end{equation*}

The space $\mathcal{D}^{\prime}=\mathcal{D}^{\prime}(\mathbb{R}^{d})$ is the space of Schwartz distributions, that is, the topological dual of the space of compactly supported infinitely differentiable functions.

The space of H{\"o}lder continuous functions $\varphi\colon\mathbb{R}^{k}\to\mathbb{R}$ of order $r \geq 0$ is denoted by $\mathcal{C}^{r}$, that is, $\varphi$ is bounded (not necessarily continuous) if $r = 0$, H\"older continuous for $ 0 < r \leq 1$ (which amounts precisely to Lipschitz continuous for $r=1$, the derivative does not necessarily exist everywhere). For $ r > 1 $ not an integer the function~$\varphi$ is $\lfloor r\rfloor $-times continuously differentiable and the derivatives of order $\lfloor r\rfloor$ are H\"older continuous of order $r-\lfloor r\rfloor$. The space $\mathcal{C}^{r}$ is equipped with the norm
\begin{equation*}
  \lVert \varphi \rVert_{\mathcal{C}^r} := \sum_{j=0}^{\lfloor r \rfloor} \| D^j \varphi \|_\infty + \1_{r>\lfloor r \rfloor} \| D^{\lfloor r \rfloor} \varphi \|_{r - \lfloor r \rfloor},
\end{equation*}
where $\lVert \cdot \rVert_\beta$ denotes the $\beta$-H\"older norm for $\beta \in (0,1]$, and $\lVert \cdot \rVert_\infty$ denotes the supremum norm. If a function $\varphi\in\mathcal{C}^{r}$ has compact support, we say $\varphi\in\mathcal{C}_{0}^{r}$. Additionally, we use $\varphi\in\mathcal{B}^{r}$ if $\varphi\in\mathcal{C}_{0}^{r}$ is such that $\|\varphi\|_{\mathcal{C}^{r}}\leq 1$ and $\mathrm{supp}\,\varphi\subset B(0,1)$, and $\varphi\in\mathcal{B}^{r}_{n}$ for $n\in \N$ if $\varphi \in \mathcal{B}^{r}$ and $\varphi$ annihilates all polynomials of degree at most $n$. We set $\mathcal{B}^{r}_{-n}(\R^k) := \mathcal{B}^{r}(\R^k)$ for all $n \in \N$.

\section{Lifting Sobolev paths to Sobolev rough paths}\label{sec:lifting}

This section is devoted to show that every path of suitable Sobolev regularity can be lifted to a weakly geometric rough path \emph{possessing exactly the same Sobolev regularity}. To prove this statement, we proceed via an approach based on Hairer's reconstruction theorem appearing in the theory of regularity structures~\cite{Hairer2014}, which requires not only to use a Sobolev topology on the space of modelled distributions, as introduced in \cite{Hairer2017} and \cite{Liu2016} (see also \cite{Hensel2020}), but additionally to generalize the definition of models from the originally assumed H{\"o}lder bounds to more general Sobolev bounds. For a further discussion on this point we refer the end of Subsection~\ref{subsec:regularity structures}.

\subsection{Elements of regularity structures in a Sobolev stetting}\label{subsec:regularity structures}

In order to construct a Sobolev rough path lift of a Sobolev path relying on Hairer's theory of regularity structures, we introduce the essential ingredients of the theory in the following. For more detailed introductions we refer to \cite{Hairer2015,Chandra2017}. Let us start by recalling the definition of a regularity structure as given in \cite[Definition~2.1]{Hairer2014}.

\begin{definition}
  A triplet $\mathcal{T}=(A, T, G)$ is called \textit{regularity structure} if it consists of the following three objects:
  \begin{itemize}
    \item An \textit{index set} $A \subset\R$, which is locally finite\footnote{That is, $A$ does not contain any cluster point.} and bounded from below, with $0\in A$.
    \item A \textit{model space} $T=\bigoplus_{\alpha\in A}T_{\alpha}$, which is a graded vector space with each $T_{\alpha}$ a Banach space and $T_{0}\approx\R$. Its unit vector is denoted by $\mathbf{1}$.
    \item A \textit{structure group} $G$ consisting of linear operators acting on $T$ such that, for every $\Gamma\in G$, every $\alpha\in A$, and every $a\in T_{\alpha}$ it holds
          \begin{align*}
            \Gamma a-a\in\bigoplus_{\beta\in A;\,\beta<\alpha}T_{\beta}.
          \end{align*}
          Moreover, $\Gamma\mathbf{1}=\mathbf{1}$ for every $\Gamma\in G$. 
  \end{itemize}
  For any $\tau\in T$ and $\alpha\in A$ we denote by $\mathcal{Q}_{\alpha}\tau$ the projection of $\tau$ onto $T_{\alpha}$ and set $|\tau |_{\alpha}:=\|\mathcal{Q}_{\alpha}\tau\|$. Furthermore, for $\gamma > \min A$ we set $T_{\gamma}^{-}:=\bigoplus_{\alpha\in A_{\gamma}}T_{\alpha}$ where $A_{\gamma}:=\{\alpha\in A\,:\,\alpha<\gamma\}$.
\end{definition}

In view of the definition of (real-valued) Sobolev spaces (see e.g. \cite[Definition~2.1]{Hairer2017}) and of models with global bounds (see \cite[Definition~2.8]{Hairer2017}), we introduce a Sobolev version of models with global bounds.

\begin{definition}[Sobolev model]\label{def:Sobolev model}
  Let $\mathcal{T}=(A,T,G)$ be a regularity structure. For $p\in [1,+\infty]$ and $\gamma > \min A$ a \emph{Sobolev model} is a pair $(\Pi,\Gamma)$ that satisfies the following conditions:
  \begin{itemize}
    \item $\Pi = (\Pi_x)_{x\in \R^d}$ is a collection of linear maps $\Pi_x \colon \mathcal{T}_{\gamma}^{-}\to \mathcal{D}^\prime (\R^d)$ such that 
               \begin{equation*}
                 \|\Pi\|_{p} := \sup_{\zeta \in A_{\gamma}} \sup_{\tau \in T_{\zeta}} 
                 |\tau |_{\zeta}^{-1}\bigg \| \bigg \| \sup_{\eta \in \mathcal{B}^{r}_{[\zeta]}(\R^d)}  
                 \frac{|\langle \Pi_x \tau, \eta^{\lambda}_x \rangle |}{\lambda^{\zeta} } \bigg \|_{L^p(\d x)} \bigg \|_{L^p_{\lambda}} 
                 <+\infty,
               \end{equation*}
               where $\eta^\lambda_x$ is defined below in Definition 3.6 and $r = \lfloor |\min A| \rfloor + 1$.
    \item $\Gamma = (\Gamma_{x,y})_{x,y\in \R^d}$ fulfills $\Gamma_{x,y}\in G$ for all $x,y\in \R^d$ and 
                \begin{equation*}
                  \|\Gamma \|_{p} := \sup_{\beta <\zeta \in A_{\gamma}}\sup_{\tau \in T_{\zeta}}
                  |\tau |_{\zeta}^{-1}\bigg \| \bigg \|
                  \frac{|\Gamma_{x,x+h} \tau|_{\beta}}{\|h\|^{\zeta-\beta}} \bigg \|_{L^p(\d x)} \bigg \|_{L^p_{h}}
                  <+\infty,
                \end{equation*}
                where $\|g\|_{L^p_h} = (\int_{B(0,1)} |g(h)|^p \frac{\d h}{\|h\|})^{1/p}$.
  \end{itemize} 
\end{definition}

\begin{remark}
  The Sobolev model could also be defined locally in the sense that the $L^p$-norm with respect to~$x$ is taken on compact subsets of~$\R^d$, which is closer to the original definition of models given in \cite[Definition~2.17]{Hairer2014}. However, for our purpose the global bounds are the more convenient ones. Moreover, a non-Euclidean scaling can be included in Definition~\ref{def:Sobolev model} and the extension to more general Besov bounds can be achieved by replacing the $L^p_{\lambda}$-norm by an $L^q_{\lambda}$-norm for $q\in [1,+\infty]$.
\end{remark}

\begin{remark}\label{rmk:discussion Sobolev model}
  While the definition of Sobolev models seems to be the canonical one for our later choice of a regularity structure, cf. Example~\ref{ex:basic Sobolev model} below, in general different regularity structures might lead to other natural choices of models with Sobolev type bounds. 
\end{remark}

Following~\cite{Hairer2017} and \cite{Liu2016}, we introduce the Sobolev space of modelled distributions. Notice that the definition of modelled distributions depends on the definition of models and thus the generalized definition of models in Definition~\ref{def:Sobolev model} also leads to more general notion of modelled distributions. 

\begin{definition}\label{def:Sobolev modelled distributions}
  Let $\mathcal{T}=(A,T,G)$ be a regularity structure with a model $(\Pi,\Gamma)$, $\gamma\in \R$ and $p,q\in [1,+\infty)$. The \textit{Besov space}~$\mathcal{D}_{p,q}^{\gamma}$ consists of all measurable functions $f\colon\R^{d}\to T_{\gamma}^{-}$ such that 
  \begin{align*} 
  \begin{split}
    \interleave f \interleave_{\gamma,p,q} 
    := & \sum_{\alpha \in A_\gamma}\| |f(x)|_{\alpha} \|_{L^p(\d x)}\\
    & + \sum_{\alpha \in A_\gamma}\Big(\int_{h \in B(0,1)} \left\|\frac{|f(x+h) - \Gamma_{x+h,x}f(x)|_\alpha}{\|h\|^{\gamma - \alpha}}\right\|_{L^p(\d x)}^q\frac{\d h}{\|h\|^d}\Big)^{\frac{1}{q}}<+\infty .
  \end{split}
  \end{align*}
  We refer to $\mathcal{D}_{p,p}^{\gamma}$ as \textit{Sobolev space of modelled distributions}.
\end{definition}

The corresponding Sobolev and Besov spaces consisting of real-valued distributions are introduced in the next definition following \cite[Definition~2.1]{Hairer2017}. For a more comprehensive treatment of these function spaces we refer to \cite{Triebel2010}.

\begin{definition}\label{def: Real valued Besov spaces}
  Let $\alpha <0 $, $p,q\in [1,+\infty)$ and $r\in \N$ such that $r>|\alpha|$, and define
  \begin{equation*}
    \eta^\lambda_x(y):= \lambda^{-k}\eta (\lambda^{-1}(y_1-x_1),\dots,\lambda^{-1}(y_k-x_k))
  \end{equation*}
  for $\lambda\in (0,1]$, $x=(x_1,\dots,x_k)\in \R^k$ and $y=(y_1,\dots,y_k)\in \R^k$.

  For $\alpha <0 $, the \textit{Besov space}~$\mathcal{B}^{\alpha}_{p,q}:=\mathcal{B}^{\alpha}_{p,q}(\R^k)$ is the space of all distributions~$\xi$ on $\R^k$ such that
  \begin{equation*}
    \bigg\| \Big\| \sup_{\eta\in \mathcal{B}^r(\R^k)} \frac{|\langle \xi , \eta^\lambda_x \rangle|}{\lambda^\alpha}  \Big\|_{L^p(\d x)} \bigg\|_{L^q_\lambda} <+\infty.
  \end{equation*}
  For $\alpha \geq 0 $, the \textit{Besov space}~$\mathcal{B}^{\alpha}_{p,q}:=\mathcal{B}^{\alpha}_{p,q}(\R^k)$ is the space of all distributions~$\xi$ on $\R^k$ such that
  \begin{equation*}
    \bigg\| \sup_{\eta\in \mathcal{B}^r(\R^k)}|\langle \xi , \eta^\lambda_x \rangle|  \Big\|_{L^p(\d x)}  < +\infty
    \quad \text{and}\quad
    \bigg\| \Big\| \sup_{\eta\in \mathcal{B}^r_{\lfloor \alpha \rfloor}(\R^k)} \frac{|\langle \xi , \eta^\lambda_x \rangle|}{\lambda^\alpha}  \Big\|_{L^p(\d x)} \bigg\|_{L^q_\lambda} < +\infty.
  \end{equation*}
  The \textit{Sobolev space} $W^{\alpha}_p$ is defined as $W^{\alpha}_p:=W^{\alpha}_p(\R^k):= \mathcal{B}^{\alpha}_{p,p}(\R^k)$.
\end{definition}

A function $f\colon \mathbb{R} \to \mathbb{R}$ has the fractional Sobolev regularity defined in \eqref{eq: fractional Sobolev norm} (with $[0,T]$ replaced by $\mathbb{R}$ and $\textbf{d} = |\cdot|$ induced by the Euclidean norm on $\R$) if and only if it is an element in $W^\alpha_p(\mathbb{R})$ in the sense of Definition \ref{def: Real valued Besov spaces} and the two norms are equivalent, see e.g. \cite{Triebel2010} and \cite{Simon1990}. Note that these two norms remain equivalent when they are restricted to bounded interval $[0,T]$, see e.g. \cite[Theorem 1.11]{Schneider2011}. In fact, thanks to \cite[Theorem 1.9]{Schneider2011} for any fractional Sobolev function $f$ defined on $[0,T]$ satisfying \eqref{eq: fractional Sobolev norm}, there exists a bounded extension operator $\text{Ext}$ such that $g = \text{Ext}(f)$ is a fractional Sobolev function defined on $\R$ and $g|_{[0,1]} = f$. This allows us to  work globally with $g$ instead of $f$. In view of this observation, subsequently we will not distinguish the ``localized function''~$f$ and its extension~$g$ defined on~$\R$.

The construction of a rough path lift for a $\R^d$-valued path with suitable Sobolev regularity is based on the following regularity structure.

\begin{example}\label{ex:basic Sobolev model}
  Let $\alpha \in (1/3,1/2)$ and $p \in (1,+\infty)$ such that $\alpha > 1/p$ and suppose that $W \in W^{\alpha}_p(\R)$. The path~$W$ induces a regularity structure $(A, T, G)$ via 
  \begin{equation*}
    A = \{\alpha - 1, 0\},\quad
    T = T_{\alpha - 1} \oplus  T_{0} 
    = \langle \dot{\mathbb{W}} \rangle \oplus \langle \1 \rangle,\quad
    G = \{\text{Id}_{T}\}
  \end{equation*}
  and an associated Sobolev model $(\Pi, \Gamma)$ via 
  \begin{equation*}
     \Pi_x(\dot{\mathbb{W}}) := \dot{W},\quad \Pi_x(\1) := 1 \in \R
     \quad \text{and}\quad  \Gamma_{x,y} := \text{Id}_{T}, \quad  \text{for all } x,y \in \R, 
  \end{equation*}
  where $\dot{W}$ stands for the distributional derivative of~$W$. Indeed, for $\tau = \tau_0\1\in T_0$ with $\tau_0 \in \R$ we have 
  \begin{equation*}
    \bigg \| \bigg \| \sup_{\eta \in \mathcal{B}^{r}_{0}(\R^d)}  \frac{|\langle \Pi_x \tau, \eta^{\lambda}_x \rangle |}{\lambda^{0} } \bigg \|_{L^p(\d x)} \bigg \|_{L^p_{\lambda}} = 0,
  \end{equation*}
  since in this case any test function $\eta \in \mathcal{B}^{r}_{0}(\R)$ annihilating constants has a vanishing mean. For $\tau = \tau_{\alpha - 1}\dot{\mathbb{W}} \in \Tc_{\alpha - 1}$ with $\tau_{\alpha - 1} \in \R$ we have
  \begin{align*}
    \bigg \| \bigg \| \sup_{\eta \in \mathcal{B}^{r}(\R^d)}  
    \frac{|\langle \Pi_x \tau, \eta^{\lambda}_x \rangle |}{\lambda^{\alpha - 1} } \bigg \|_{L^p(\d x)} \bigg \|_{L^p_{\lambda}} &= |\tau_{\alpha - 1}|\bigg \| \bigg \| \sup_{\eta \in \mathcal{B}^{r}(\R^d)}
    \frac{|\langle \dot{W}, \eta^{\lambda}_x \rangle |}{\lambda^{\alpha - 1} } \bigg \|_{L^p(\d x)} \bigg \|_{L^p_{\lambda}} \\
    &=|\tau_{\alpha - 1}|\|\dot{W}\|_{W^{\alpha-1}_p} \lesssim |\tau_{\alpha - 1}|,
  \end{align*}
  since $\dot{W} \in W^{\alpha - 1}_p(\R)$ for $W \in W^{\alpha}_p(\R)$ by \cite[Theorem~2.3.8]{Triebel2010}. This shows that $\|\Pi\|_{p} = \|\dot{W}\|_{W^{\alpha-1}_p}$. The estimate for $\|\Gamma\|_{p}$ holds since $|\Gamma_{x,y}\tau|_{\beta} = |\tau|_{\beta} = 0$ for any $\tau \in \Tc_{\zeta}$, $\beta < \zeta$ and $x,y \in \R$.
\end{example}

Given a two-dimensional path $(Y,W)\in W^{\alpha}_p$, in order to construct a rough path lift via Hairer's theory of regularity structure, the key idea goes as follows, cf. \cite[Proposition~13.23]{Friz2014}: $W$ induces a regularity $(\mathcal{A},\Tc,\mathcal{G})$ and a model $(\Pi,\Gamma)$ as defined in Example~\ref{ex:basic Sobolev model} and $Y$ induces a modelled distribution with negative regularity in the sense of Definition~\ref{def:Sobolev modelled distributions}. Then, in the case of a H{\"o}lder continuous path $(Y,W)$ an application of Hairer's reconstruction operator (\cite[Theorem~3.10]{Hairer2014}) leads to a rough path lift with the same H{\"o}lder regularity. 
  
While the reconstruction theorem for modelled distributions with negative Sobolev regularity (but for the original H\"older type models) was recently established in \cite[Theorem~2.11]{Liu2016}, it is not sufficient to lift a Sobolev path to a rough path with the same Sobolev regularity. First, one loses already regularity when constructing a H{\"o}lder type model starting with a Sobolev path. Second, the classical bounds (relying on H{\"o}lder type models) obtained for the reconstruction operator, see \cite[Theorem~2.11]{Liu2016}, are not sufficient and would lead again to a loss of regularity. 

As a consequence, we have to derive sharper bounds for the reconstruction operator for lifting Sobolev paths to rough path in the case of Sobolev models, see \eqref{eq:Bound of reconstruction operator for Besov modelled distribution} and Remark~\ref{rmk:new bounds} below.

\subsection{Sobolev rough path lift via the reconstruction operator}\label{subsec:lift via regularity structures}

In this subsection we construct a Sobolev rough path lift of a $\R^d$-valued Sobolev path with regularity $\alpha \in (1/3,1/2)$.

\begin{theorem}\label{thm:Sobolev path lift}
  Let $\alpha \in (1/3, 1/2)$ and $p \in (1,+\infty]$ be such that $\alpha > 1/p$. For every Sobolev path $X\in  W^{\alpha}_p([0,T];\R^d)$ there exists a rough path lift $\X := (X,\mathbb{X}) \in W^{\alpha}_p([0,T];G^{2}(\R^d))$ of $X$.
\end{theorem}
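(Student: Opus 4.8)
The plan is to build the rough path lift componentwise, constructing the ``area'' $\mathbb{X}^{i,j}$ of the pair $(X^i, X^j)$ for each $1 \le i,j \le d$ via the regularity structure of Example~\ref{ex:basic Sobolev model}. Fix $i,j$ and set $W := X^j$, $Y := X^i$. Since $X \in W^\alpha_p([0,T];\R^d)$, after the extension operator discussed after Definition~\ref{def: Real valued Besov spaces} we may assume $W, Y \in W^\alpha_p(\R)$ and work globally on $\R$. The path $W$ induces the regularity structure $(A,T,G)$ with $A = \{\alpha-1, 0\}$ and the Sobolev model $(\Pi,\Gamma)$ of Example~\ref{ex:basic Sobolev model}, with $\|\Pi\|_p = \|\dot W\|_{W^{\alpha-1}_p} < +\infty$. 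The candidate modelled distribution is $f(x) := Y(x)\,\dot{\mathbb{W}} \in T^-_{\gamma}$ for a suitable $\gamma \in (0,\alpha)$ chosen so that $\gamma - (\alpha-1) > 0$; one checks $f \in \mathcal{D}^\gamma_{p,p}$ since $\Gamma$ is trivial, so the defect term in $\interleave f \interleave_{\gamma,p,p}$ reduces to $\| \, |Y(x+h) - Y(x)|/\|h\|^{\gamma - \alpha + 1} \, \|_{L^p(\d x)}$ integrated against $\d h/\|h\|$, and for $\gamma - \alpha + 1 \le \alpha$, i.e. $\gamma \le 2\alpha - 1$ (which is positive and admissible precisely because $\alpha > 1/3$), this is controlled by $\|Y\|_{W^\alpha_p}$.

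Next I would apply the Sobolev reconstruction operator, in the form to be established in the paper (the sharpened bounds of \eqref{eq:Bound of reconstruction operator for Besov modelled distribution} and Remark~\ref{rmk:new bounds}), to obtain a distribution $\mathcal{R}f \in \mathcal{B}^{\gamma - 1 + \ldots}_{p,p}$, or more precisely a distribution whose local behaviour at scale $\lambda$ around $x$ is governed by $\langle \Pi_x f(x), \eta^\lambda_x\rangle$ up to order $\lambda^\gamma$ in the appropriate $L^p_x L^p_\lambda$ sense. Then define the area by the classical formula
\begin{equation*}
  \mathbb{X}^{i,j}_{s,t} := \int_s^t (X^i_u - X^i_s)\,\d X^j_u := \big\langle \mathcal{R}f, \1_{[s,t]}\big\rangle - X^i_s\,(X^j_t - X^j_s),
\end{equation*}
interpreting the reconstructed integral in the distributional pairing against the indicator, which is legitimate after the usual approximation of $\1_{[s,t]}$ by smooth bumps. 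The algebraic (Chen) relation $\mathbb{X}_{s,u} + \mathbb{X}_{u,t} + X_{s,u}\otimes X_{u,t} = \mathbb{X}_{s,t}$ follows from additivity of the integral in the domain, and the shuffle / weak-geometricity relation $\mathrm{Sym}(\mathbb{X}_{s,t}) = \tfrac12 X_{s,t}\otimes X_{s,t}$ follows because $X$ itself has finite $1/\alpha$-variation (integration by parts for the Young-type pairing), so $\X$ takes values in $G^2(\R^d)$.

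The analytic heart is the Sobolev regularity estimate: I must show $\iint_{[0,T]^2} d_{cc}(\X_s, \X_t)^p |t-s|^{-\alpha p - 1}\,\d s\,\d t < +\infty$. Using homogeneity of $d_{cc}$, it suffices to bound $\|X_{s,t}\| \lesssim |t-s|^\alpha \cdot (\text{something in } L^p)$ — which is immediate from $X \in W^\alpha_p$ — and $\|\mathbb{X}_{s,t}\| \lesssim |t-s|^{2\alpha} \cdot (\text{something in } L^p)$ in the joint $L^p(\d s\,\d t/|t-s|^{2\alpha p + 1})$-sense. This second bound is exactly where the refined reconstruction estimate must be used rather than the H\"older reconstruction: $\langle \mathcal{R}f, \1_{[s,t]}\rangle - \langle \Pi_s f(s), \1_{[s,t]}\rangle$ must be shown to be of order $|t-s|^{2\alpha}$ in the Sobolev sense, and $\langle \Pi_s f(s), \1_{[s,t]}\rangle = X^i_s(X^j_t - X^j_s)$ exactly, so the difference is $\mathbb{X}^{i,j}_{s,t}$. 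The main obstacle, as the paper flags, is that applying a H\"older-type reconstruction theorem would lose regularity twice — once passing from the Sobolev path to a H\"older model, once in the reconstruction bound — so the whole argument hinges on having the reconstruction operator and its bounds formulated directly in the Sobolev (Besov) category with globally $L^p$-in-$x$ control; granting those bounds from the body of the paper, the remaining work is the bookkeeping above, with care that the exponent arithmetic ($\gamma$ between $2\alpha - 1$ and $\alpha$, reconstruction producing order $2\alpha$, all uniformly on $[0,T]$) closes, and that the case $p = +\infty$ reduces to the known H\"older statement of \cite[Proposition~13.23]{Friz2014}.
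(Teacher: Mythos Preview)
Your overall architecture matches the paper's: build $\mathbb{X}^{i,j}$ pairwise via the regularity structure of Example~\ref{ex:basic Sobolev model}, reconstruct $f(x) = Y_x\,\dot{\mathbb{W}}$, and read off the area from the reconstruction tested against indicators. However, there is a genuine gap in the geometricity step, and a related sign error upstream. First, $\gamma = 2\alpha - 1$ is \emph{negative} for $\alpha \in (1/3,1/2)$, not positive as you write. This is not cosmetic: it forces the reconstruction $\mathcal{R}f$ to be non-unique (cf.\ \cite{Liu2016,Caravenna2020}), so the particular $\mathcal{R}f$ built from wavelets has no reason to respect any algebraic identity. In particular, your claim that $\mathrm{Sym}(\mathbb{X}_{s,t}) = \tfrac{1}{2} X_{s,t}\otimes X_{s,t}$ follows from ``integration by parts for the Young-type pairing'' is unfounded: for $\alpha < 1/2$ there is no Young integral of $X^i$ against $X^j$, and the reconstructed object carries no a priori shuffle relation. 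The paper does not try to prove this; instead it \emph{forces} geometricity by adding the explicit symmetric correction $F^{i,j}_t := \tfrac{1}{2}(X^i_{0,t}X^j_{0,t} - \mathbb{X}^{i,j}_{0,t} - \mathbb{X}^{j,i}_{0,t})$, checks that $F$ has the right increments, and verifies that $\X_t := (X_{0,t}, \mathbb{X}_{0,t} + F_t)$ lands in $G^2(\R^d)$ while still satisfying~\eqref{eq:regularity of lift}.

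A second, more minor point: the analytic bound you actually need on $\mathbb{X}$ is $\iint |\mathbb{X}_{s,t}|^{p/2}/|t-s|^{\alpha p + 1}\,\d s\,\d t < \infty$ (this is what $d_{cc}(\X_s,\X_t)^p$ demands at the second level), not an $L^p$ bound with denominator $|t-s|^{2\alpha p+1}$ as you write. These are not equivalent, and obtaining the correct $L^{p/2}$ integrability is precisely why the paper proves the sharpened reconstruction estimate~\eqref{eq:Bound of reconstruction operator for Besov modelled distribution} with $L^{p/2}$ in place of $L^p$; see Remark~\ref{rmk:new bounds}.
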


Let us first observe that it is sufficient to prove Theorem~\ref{thm:Sobolev path lift} for a $\R^2$-valued path $X = (Y,W)\in W^{\alpha}_p([0,T];\R^2)$. The $d$-dimensional case immediately follows from successively applying the $2$-dimensional case. Secondly, we extend $X$ continuously from $[0,T]$ to $\R$ such that $X=(Y,W) \in W^{\alpha}_p(\R;\R^2)$ for $\alpha \in (1/3, 1/2)$ and $\alpha > 1/p$. By classical Besov embeddings (see e.g. \cite{Triebel2010}), we note that $X \in \mathcal{B}^{\alpha - 1/p}_{\infty, \infty}$ and thus $\sup_{x \in \R} |X(x)| <+ \infty$. 

\smallskip

Let $(\mathcal{A},\Tc,\mathcal{G})$ be the regularity structure induced by the second component~$W$ with the corresponding model $(\Pi,\Gamma)$ as defined in Example~\ref{ex:basic Sobolev model}. The first component~$Y$ induces a modelled distribution $\dot{\mathbb{Z}}\colon\R \rightarrow \Tc$ by setting $\dot{\mathbb{Z}}(x) := Y_x\dot{\mathbb{W}}$ for $x\in \R$. Then, we have $\dot{\mathbb{Z}} \in \mathcal{D}^{\gamma}_{p,p}$ with $\gamma := 2\alpha - 1$ in the sense of Definition~\ref{def:Sobolev modelled distributions}. Indeed, note that with $\zeta = \alpha -1$ the translation bound of $\dot{\mathbb{Z}}$ is equal to
\begin{align*}
  \Big(\int_{h \in [-1,1]} \Big\|&\frac{| \dot{\mathbb{Z}}(x + h) - \Gamma_{x+h,x}\dot{\mathbb{Z}}(x)|_{\zeta}}{|h|^{\gamma - \zeta}}\Big\|^p_{L^p(\d x)}\,\frac{\d h}{|h|}\Big)^{\frac{1}{p}} \\
  &= \Big(\int_{h \in [-1,1]} \Big\|\frac{|Y(x + h) - Y(x)|}{|h|^{\alpha}}\Big\|^p_{L^p(\d x)}\,\frac{\d h}{|h|}\Big)^{\frac{1}{p}} 
  \lesssim \|Y\|_{W^{\alpha}_p} <+ \infty,
\end{align*}
where the inequality follows from the equivalence of Sobolev norms, see e.g. \cite{Triebel2010} and \cite{Simon1990}. Similarly one can easily show that $\dot{\mathbb{Z}}$ also has a bounded $L^p$--norm as $Y \in W^\alpha_p$ satisfies this property by definition.

Before coming to the actual proof of Theorem~\ref{thm:Sobolev path lift}, we need to establish the analog of the reconstruction theorem similar to \cite[Theorem~2.11]{Liu2016}, that is, we need to show the existence of the reconstruction operator~$\mathcal{R}$, which is required for lifting Sobolev paths to Sobolev rough paths (Lemma~\ref{lem:regularity of Rf}), mapping modelled distributions into a Sobolev space and the required bound~\eqref{eq:Bound of reconstruction operator for Besov modelled distribution} below (Lemma~\ref{lem:Bound in reconstruction theorem w.r.t. Sobolev model}).

Namely, for the regularity structure $(\mathcal{A},\Tc,\mathcal{G})$ and the Sobolev model $(\Pi,\Gamma)$ as defined Example~\ref{ex:basic Sobolev model}, there exists a distribution $\mathcal{R}\dot{\mathbb{Z}}\in W^{\alpha - 1}_p$ satisfying that
\begin{equation}\label{eq:Bound of reconstruction operator for Besov modelled distribution}
  \left\lVert \Big\| \sup_{\eta \in \mathcal{B}^r} \frac{|\langle \mathcal{R}\dot{\mathbb{Z}} - \Pi_x\dot{\mathbb{Z}}(x), \eta^\lambda_x \rangle|}{\lambda^\gamma} \Big\|_{L^{\frac{p}{2}}(\d x)} \right\rVert_{L^{\frac{p}{2}}_\lambda} \lesssim \|\Pi\|_{p}(1 + \|\Gamma\|_{p})\vertiii{\dot{\mathbb{Z}}}_{\gamma,p,p}.
\end{equation}
Note that $L^{p/2}$-norms are used in the Estimate~\eqref{eq:Bound of reconstruction operator for Besov modelled distribution} instead of $L^{p}$-norms, as usually obtained for the reconstruction operator, see Remark~\ref{rmk:new bounds} below.

In order to define $\mathcal{R}\dot{\mathbb{Z}}$, let $r \in \N$ be such that $r > |\alpha - 1- \frac{1}{p}|$ (we will see later why such special $r$ is needed). We fix $\varphi\colon\mathbb{R}\to\mathbb{R}$ and $\psi\colon\mathbb{R}\to\mathbb{R}$ both in $\mathcal{C}^r_0$ as the father wavelet and mother wavelet, respectively, of a wavelet analysis on $\R$ which has the following properties:
\begin{enumerate}
  \item For every polynomial $P$ of degree at most~$r$ there exists a polynomial $\hat{P} $ such that 
        \begin{equation*}
          \sum_{y\in\mathbb{Z}}\hat{P}(y) \,\varphi(x-y) =P(x), \quad x\in\mathbb{R}.
        \end{equation*}
  \item For every $y\in\mathbb{Z}$ one has $\int_{\mathbb{R}}\varphi(x)\varphi(x-y)\dd x=\delta_{y,0}$.
  \item There exist coefficients $(a_{k})_{k\in\mathbb{Z}}$ with only finitely many non-zero values such that 
        \begin{equation*}
          \varphi(x)=\sum_{k\in\mathbb{Z}}a_{k}\varphi(2x-k),\quad x\in \R. 
        \end{equation*}
  \item The function~$\psi$ annihilates all polynomials of degree at most~$r$.
  \item For any $n\geq 0$, the set 
        \begin{equation*}
          \{\varphi_{x}^{n}\,:\, x\in\Lambda_{n}\}\cup\{ \psi_{x}^{m}\,:\, x\in\Lambda_{m},\, m\geq n\}
        \end{equation*}
        constitutes an orthonormal basis of~$L^{2}$.
\end{enumerate}
Here we used the notation
\begin{equation*}
  \varphi_{x}^{n}(y):= 2^{\frac{n}{2}} \varphi(2^{n}(y-x))\quad \text{and}\quad  \psi_{x}^{n}(y):= 2^{\frac{n}{2}} \psi(2^{n}(y-x)),
\end{equation*}
for $x,y\in\R$ and $\Lambda_n := \{ 2^{-n }k\,:\, k \in \mathbb{Z} \}$. For more details on wavelet analysis we refer the reader to \cite{Meyer1992} and \cite{Daubechies1988} or in our particular setting to \cite[Section~2.1]{Hairer2017}.

As in the proof of \cite[Theorem~2.11]{Liu2016}, we define
\begin{equation}\label{eq:reconstrcution operator for Besov model}
  \mathcal{R} f := \sum_{n\in \N_0} \sum_{x\in \Lambda_n}  \langle \Pi_x \overline{f}^n (x), \psi^n_x \rangle   \psi^n_x + \sum_{x \in \Lambda_0}\langle \Pi_x \overline{f}^0 (x), \varphi^0_x   \rangle   \varphi^0_x,
\end{equation}
where $f := \dot{\mathbb{Z}}$ and $\bar{f}^n(x) := \int_{B(x,2^{-n})}2^n\Gamma_{x,y}f(y) \dd y$ for $x \in \Lambda_n$, cf. \cite[(2.8)]{Hairer2017}.
 
\begin{lemma}\label{lem:regularity of Rf}
  The distribution $\mathcal{R}f$ defined in~\eqref{eq:reconstrcution operator for Besov model} is well-defined and belongs to $W^{\alpha - 1}_p$.
\end{lemma}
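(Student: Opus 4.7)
The plan is to derive the claim from the wavelet characterization of Besov spaces, which converts membership in $W^{\alpha - 1}_p(\R) = \mathcal{B}^{\alpha - 1}_{p,p}(\R)$ into a weighted $\ell^p$-summability condition on wavelet coefficients. Since $r$ has been chosen so that $r > |\alpha - 1 - 1/p|$ and $\varphi, \psi$ satisfy properties~(1)--(5), the standard wavelet characterization (see e.g. \cite{Meyer1992} or \cite[Section~2.1]{Hairer2017}) yields, for any tempered distribution $g$,
\begin{equation*}
\|g\|^p_{W^{\alpha - 1}_p} \sim \sum_{x \in \Lambda_0} |\langle g, \varphi^0_x\rangle|^p + \sum_{n \geq 0} 2^{np(\alpha - 1/2 - 1/p)} \sum_{x \in \Lambda_n} |\langle g, \psi^n_x\rangle|^p.
\end{equation*}
By orthonormality of $\{\varphi^0_x\}\cup\{\psi^n_x\}_{n\ge 0}$ in $L^2$, the wavelet coefficients of $\mathcal{R}f$ from \eqref{eq:reconstrcution operator for Besov model} are (provided the series converges) precisely the prescribed coefficients $\langle \Pi_x \overline{f}^n(x), \psi^n_x\rangle$ and $\langle \Pi_x \overline{f}^0(x), \varphi^0_x\rangle$.

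Next I would exploit the very simple structure of the model from Example~\ref{ex:basic Sobolev model}: since $\Gamma \equiv \mathrm{Id}_T$ and $f(x) = Y_x \dot{\mathbb{W}}$ has only its level-$(\alpha - 1)$ component, the local average reduces to $\overline{f}^n(x) = \overline{Y}^n_x \dot{\mathbb{W}}$ with $\overline{Y}^n_x := 2^n\int_{B(x, 2^{-n})} Y_y \, \d y$, and hence
\begin{equation*}
\langle \Pi_x \overline{f}^n(x), \psi^n_x\rangle = \overline{Y}^n_x \langle \dot{W}, \psi^n_x\rangle,
\end{equation*}
with the analogous identity for the $\varphi^0$-coefficients. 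These scalar averages are uniformly bounded: the observation $X \in \mathcal{B}^{\alpha - 1/p}_{\infty,\infty}$ noted before the theorem yields $\sup_{n, x}|\overline{Y}^n_x| \leq \|Y\|_{\infty} < +\infty$.

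Combining these two steps delivers the desired bound. Plugging the identifications into the wavelet characterization gives
\begin{equation*}
\|\mathcal{R}f\|^p_{W^{\alpha - 1}_p} \lesssim \|Y\|^p_{\infty}\Big(\sum_{x \in \Lambda_0} |\langle \dot{W}, \varphi^0_x\rangle|^p + \sum_{n \geq 0} 2^{np(\alpha - 1/2 - 1/p)}\sum_{x \in \Lambda_n} |\langle \dot{W}, \psi^n_x\rangle|^p\Big) \sim \|Y\|^p_{\infty}\|\dot{W}\|^p_{W^{\alpha - 1}_p},
\end{equation*}
and the right-hand side is finite because $W \in W^{\alpha}_p(\R)$ implies $\dot{W} \in W^{\alpha - 1}_p(\R)$ by \cite[Theorem~2.3.8]{Triebel2010}. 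Applied to the partial sums of \eqref{eq:reconstrcution operator for Besov model}, the very same bound shows that they are Cauchy in $W^{\alpha - 1}_p$; hence the series converges in that space and defines a genuine element of $W^{\alpha - 1}_p$.

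The main technical hurdle is not the chain of estimates --- which is essentially bookkeeping --- but the clean invocation of the wavelet characterization at negative regularity $\alpha - 1$ in tandem with the identification of the formal wavelet coefficients in \eqref{eq:reconstrcution operator for Besov model} with genuine Besov wavelet coefficients of a tempered distribution; both are handled once Cauchyness of the partial sums is established via the bound above.
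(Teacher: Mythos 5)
Your argument is correct and follows essentially the same route as the paper: both proofs identify the wavelet coefficients of $\mathcal{R}f$ as $\langle \Pi_x \overline{f}^n(x), \psi^n_x\rangle$, use $\Gamma=\mathrm{Id}_T$ and $f(x)=Y_x\dot{\mathbb W}$ to factor out a bounded local average of $Y$, bound it by $\|Y\|_\infty$ (using the embedding $W^\alpha_p \hookrightarrow \mathcal{B}^{\alpha-1/p}_{\infty,\infty}$), and then invoke the wavelet characterization of $W^{\alpha-1}_p$ from \cite[Section~2.1, Proposition~2.4]{Hairer2017} together with $\dot W\in W^{\alpha-1}_p$; your explicit normalization $2^{np(\alpha-1/2-1/p)}$ agrees with the paper's $\|\cdot\|_{\ell^p_n}$/$\|\cdot\|_{\ell^p}$ bookkeeping. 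The only cosmetic difference is that you spell out the Cauchyness of the partial sums, which the paper subsumes in its citation of \cite[Proposition~2.4]{Hairer2017}.
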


\begin{proof}
  We set for every $n \geq 0$, $x \in \Lambda_n$ a real number
  \begin{equation*}
    a^{n,\psi}_x := \langle \mathcal{R}f, \psi^n_x \rangle = \langle \Pi_x\overline{f}^n(x), \psi^n_x \rangle,
  \end{equation*}
  and for $x \in \Lambda_0$, $b^0_x := \langle \mathcal{R}f, \varphi_x \rangle = \langle \Pi_x\overline{f}^0(x), \varphi_x \rangle$. Invoking \cite[(2.2)]{Hairer2017}, it suffices to show that 
  \begin{equation*}
    \left\lVert \Big\|\frac{a^{n,\psi}_x}{2^{-\frac{n}{2}- n(\alpha - 1)}}\Big\|_{\ell^p_n} \right\rVert_{\ell^p} <+ \infty
    \quad\text{and}\quad
    \Big\| b^0_x \Big\|_{\ell^p_0} < +\infty.
  \end{equation*}
  To this end, we remark that by the definition of $\overline{f}^n$ and the fact that in our setting $\Gamma_{x,y} = \text{Id}_{\Tc}, \Pi_xf(y) = Y_y\dot{W}$, it holds that
  \begin{align*}
    |a^{n,\psi}_x|  
    \leq \int_{B(x,2^{-n})} 2^{n}|\langle \Pi_x\Gamma_{x,y}f(y), \psi^n_x \rangle| \dd y
    = \int_{B(x,2^{-n})} 2^{n}|Y_y||\langle \dot{W}, \psi^n_x \rangle| \dd y.
  \end{align*}
  It follows that
  \begin{align*}
    \Big\|\frac{a^{n,\psi}_x}{2^{-\frac{n}{2}- n(\alpha - 1)}}\Big\|_{\ell^p_n} 
    &\le \Big(\sum_{x \in \Lambda_n}2^{-n}\Big(\int_{B(x,2^{-n})} 2^{n}|Y_y|\frac{|\langle \dot{W}, \psi^n_x \rangle|}{2^{-\frac{n}{2}- n(\alpha - 1)}} \dd y\Big)^p\Big)^{\frac{1}{p}} \\
    &\lesssim |Y|_{\infty}\Big(\sum_{x \in \Lambda_n}2^{-n}\Big(\frac{|\langle \dot{W}, \psi^n_x \rangle|}{2^{-\frac{n}{2}- n(\alpha - 1)}}\Big)^p\Big)^{\frac{1}{p}}
  \end{align*}
  and therefore
  \begin{equation*}
    \left\lVert \Big\|\frac{a^{n,\psi}_x}{2^{-\frac{n}{2}- n(\alpha - 1)}}\Big\|_{\ell^p_n} \right\rVert_{\ell^p} \lesssim \left\lVert \Big\|\frac{|\langle \dot{W}, \psi^n_x \rangle|}{2^{-\frac{n}{2}- n(\alpha - 1)}}\Big\|_{\ell^p_n} \right\rVert_{\ell^p} <+ \infty,
  \end{equation*}
  since $W \in W^{\alpha}_p$ by assumption. The same argument gives us $\| b^0_x \|_{\ell^p_0} < +\infty$. Hence, we conclude that $\mathcal{R}f \in W^{\alpha - 1}_p$ by using \cite[Proposition~2.4]{Hairer2017}.
\end{proof}

As a next step we show Bound~\eqref{eq:Bound of reconstruction operator for Besov modelled distribution} for our Sobolev model.

\begin{lemma}\label{lem:Bound in reconstruction theorem w.r.t. Sobolev model}
  The distribution $\mathcal{R}f$ defined in \eqref{eq:reconstrcution operator for Besov model} satisfies Bound~\eqref{eq:Bound of reconstruction operator for Besov modelled distribution}.
\end{lemma}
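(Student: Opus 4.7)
The plan is to adapt the standard wavelet-based proof of the reconstruction theorem, as in \cite[Theorem~2.11]{Liu2016} and \cite[Theorem~2.11]{Hairer2017}, to the present Sobolev model setting. The crucial new feature is that the bounds on the model and on the modelled distribution are only available in averaged $L^p$ form, so the pointwise multiplicativity that underlies the classical argument has to be replaced by a H\"older-type pairing, and this is precisely what forces the $L^{p/2}$ norm on the left-hand side of~\eqref{eq:Bound of reconstruction operator for Besov modelled distribution}.

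First, I fix $\eta \in \mathcal{B}^r$ and $\lambda \in (0,1]$, and choose $N = N(\lambda) \in \N_0$ with $2^{-N-1} < \lambda \le 2^{-N}$. Using the wavelet resolution of the identity (property~(5) of the excerpt) to expand $\Pi_x f(x)$ and subtracting this from the defining formula~\eqref{eq:reconstrcution operator for Besov model} for $\mathcal{R}f$, I obtain, after pairing against $\eta^{\lambda}_x$, an expression of the form
\begin{equation*}
\langle \mathcal{R}f - \Pi_x f(x), \eta^{\lambda}_x \rangle = \sum_{n \ge 0} \sum_{y \in \Lambda_n} \langle \Pi_y \bar{f}^n(y) - \Pi_x f(x), \psi^n_y \rangle \, \langle \psi^n_y, \eta^{\lambda}_x \rangle + (\text{level-}0\text{ term}).
\end{equation*}
The intertwining identity $\Pi_y \Gamma_{y,x} = \Pi_x$ rewrites the inner factor as $\langle \Pi_y(\bar{f}^n(y) - \Gamma_{y,x} f(x)), \psi^n_y \rangle$, and combining the averaging defining $\bar{f}^n(y)$ with the cocycle $\Gamma_{y,z} \Gamma_{z,x} = \Gamma_{y,x}$ further rewrites it as an average over $z \in B(y, 2^{-n})$ of $\Gamma_{y,z}(f(z) - \Gamma_{z,x} f(x))$, thereby exposing the translation increment of $f$ that is controlled by $\vertiii{f}_{\gamma,p,p}$.

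Next I split the sum into the regimes $n \le N$ and $n > N$ and bound the wavelet pairings in the classical way: for $n > N$ the cancellations of $\eta \in \mathcal{B}^r$ give $|\langle \psi^n_y, \eta^{\lambda}_x \rangle| \lesssim 2^{-n(r + 1/2)} \lambda^{-r - 1/2}$, supported on $|y - x| \lesssim \lambda$, while for $n \le N$ the localisation of $\eta^{\lambda}_x$ gives $|\langle \psi^n_y, \eta^{\lambda}_x \rangle| \lesssim 2^{-n/2}$, again with support constraint $|y - x| \lesssim \lambda$. Decomposing the rewritten increment level by level along $T_\zeta$ for $\zeta \in A_\gamma$, each term becomes a triple product: the $\|\Pi\|_p$-bound handles $\langle \Pi_y \cdot, \psi^n_y \rangle$ at scale $2^{-n}$, the $\|\Gamma\|_p$-bound handles the $\Gamma_{y,z}$-factor at scale $|z - y| \lesssim 2^{-n}$, and $\vertiii{f}_{\gamma,p,p}$ handles the residual $f$-increment at scale $|x - z| \lesssim \lambda$. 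The condition $r > |\alpha - 1 - 1/p|$ ensures that the resulting geometric series in $n$ converge, and the combined scaling produces the required factor $\lambda^\gamma$ matching the left-hand side.

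The main obstacle, and the reason for the $L^{p/2}$ norm on the left-hand side, is the following. In the classical H\"older-model reconstruction, $\|\Pi\|$, $\|\Gamma\|$ and $\vertiii{f}$ are suprema, so their product can be pulled outside any pairing pointwise. Here they are only $L^p$-averages in different variables, so no such pointwise factorisation is available. Instead, after writing each scale contribution as a product of a $\Pi$-factor and a $\Gamma$-or-$f$-factor, I will apply Minkowski's inequality to move the $L^p(\d x) \otimes L^p_\lambda$ norms past the integrals in $z$ and $y$, and then H\"older's inequality with conjugate exponents $(2,2)$ to separate the two averaged quantities. This pairing turns an $L^p \cdot L^p$ product into an $L^{p/2}$ bound for the full expression and produces exactly the factor $\|\Pi\|_p(1 + \|\Gamma\|_p) \vertiii{f}_{\gamma,p,p}$ on the right-hand side of~\eqref{eq:Bound of reconstruction operator for Besov modelled distribution}. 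The delicate point to verify is that the changes of variables linking the $\Gamma$-bound (indexed by $h = z - y$) with the $\Pi$-bound (indexed by $\lambda \rightsquigarrow 2^{-n}$) are compatible with the H\"older pairing and keep the geometric sums in $n$ simultaneously summable on both factors.
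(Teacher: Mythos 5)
Your proposal follows the same route as the paper: expand $\langle \mathcal{R}f - \Pi_x f(x), \eta^\lambda_x\rangle$ in the wavelet basis, split the sum at the scale $\lambda \sim 2^{-n_0}$, apply the standard estimates for $\langle\psi^n_y,\eta^\lambda_x\rangle$ in the two regimes $n\le n_0$ and $n>n_0$, and then separate the $\Pi$-factor from the $f$-increment factor by Minkowski's integral inequality followed by H\"older with conjugate exponents $(2,2)$ --- which is precisely what produces the $L^{p/2}$ norm on the left-hand side. So the central mechanism is correctly identified.

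Two remarks are nevertheless in order. First, the ``delicate point'' you flag at the end --- compatibility of the changes of variable linking the $\Gamma$-bound and the $\Pi$-bound under the H\"older pairing, and simultaneous summability of the geometric series on both factors --- is not actually resolved in the paper, because the paper never needs to resolve it: Lemma~\ref{lem:Bound in reconstruction theorem w.r.t. Sobolev model} is proved only for the concrete regularity structure of Example~\ref{ex:basic Sobolev model}, where $\Gamma_{x,y}=\mathrm{Id}_T$. The $\Gamma$-factor therefore disappears entirely, and the H\"older pairing only has to split two $L^p$ quantities (the $\Pi$-bound for $\dot W$ and the translation bound of $f$, i.e.\ of $Y$), which is exactly what the exponent pair $(2,2)$ accommodates. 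Your sketch keeps the general $\Gamma$, which would require genuine extra work at precisely the place you identify; for the lemma as stated, the concrete model makes the issue moot. Second, there are two small exponent slips in your kernel estimates: for $n\le N$ the correct bound is $|\langle\psi^n_y,\eta^\lambda_x\rangle|\lesssim 2^{n/2}$ (not $2^{-n/2}$), and for $n>N$ it is $\lesssim 2^{-n(r+1/2)}\lambda^{-(r+1)}$ (not $\lambda^{-(r+1/2)}$). Neither affects the structure of the argument, but getting them right is what leads to the geometric ratio $2^{(n_0-n)(r+\alpha-1-1/p)}$ and hence to the condition $r>|\alpha-1-1/p|$ that the paper imposes.
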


\begin{proof}
  For fixed $x \in \R$, $\lambda \in (0,1]$ and $\eta \in \mathcal{B}^r$, we have
  \begin{equation*}  
    \langle \mathcal{R}f - \Pi_xf(x), \eta^\lambda_x\rangle = \sum_{n \geq 0} \sum_{y \in \Lambda_n}\langle \mathcal{R}f - \Pi_xf(x), \psi^n_y\rangle\langle\psi^n_y,\eta^\lambda_x\rangle + \sum_{y \in \Lambda_0}\langle \mathcal{R}f - \Pi_xf(x), \varphi_y\rangle \langle \varphi_y, \eta^\lambda_x\rangle, 
  \end{equation*}
  where in our case 
  \begin{align*}
    \langle \mathcal{R}f - \Pi_xf(x), \psi^n_y\rangle &= \langle \Pi_y\overline{f}^n(y) - \Pi_xf(x), \psi^n_y \rangle \\
    &= \int_{B(y,2^{-n})} 2^{n}\langle \Pi_y(\Gamma_{y,z}f(z) - \Gamma_{y,x}f(x)), \psi^n_y \rangle \dd z \\
    &= \int_{B(y,2^{-n})} 2^{n}\langle (Y_z - Y_x)\dot{W}, \psi^n_y \rangle \dd z
  \end{align*}
  and the same expression holds for $\langle \mathcal{R}f - \Pi_xf(x), \varphi_y\rangle$. It follows that
  \begin{equation}\label{eq:Bound for the inner product against single mother wavalet}
    |\langle \mathcal{R}f - \Pi_xf(x), \psi^n_y\rangle| \le \int_{B(y,2^{-n})} 2^{n}|Y_z - Y_x||\langle \dot{W}, \psi^n_y \rangle| \dd z.
  \end{equation}
  As in the proof of \cite[Theorem~3.1]{Liu2016} we use $\| \cdot\|_{L^q_{n_0}(\d\lambda)}$ to denote the $L^q$-norm with respect to the finite measure (with the total mass $\ln2$) $\lambda^{-1}\1_{(2^{-n_0-1},2^{-n_0}]}\dd\lambda$, and we consider two quantities 
  \begin{equation}\label{eq:the sum up to n0}
    \left\lVert \Big\| \sup_{\eta \in \mathcal{B}^r}\frac{|\sum_{n \le n_0}\sum_{y \in \Lambda_n}\langle \mathcal{R}f - \Pi_xf(x), \psi^n_y \rangle \langle \psi^n_y, \eta^\lambda_x \rangle |}{\lambda^\gamma} \Big\|_{L^{\frac{p}{2}}(\d x)} \right\rVert_{L^{\frac{p}{2}}_{n_0}(\d \lambda)}
  \end{equation}
  and 
  \begin{equation}\label{eq:the sum up after n0}
    \left\lVert \Big\| \sup_{\eta \in \mathcal{B}^r}\frac{|\sum_{n > n_0}\sum_{y \in \Lambda_n}\langle \mathcal{R}f - \Pi_xf(x), \psi^n_y \rangle \langle \psi^n_y, \eta^\lambda_x \rangle |}{\lambda^\gamma} \Big\|_{L^{\frac{p}{2}}(\d x)} \right\rVert_{L^{\frac{p}{2}}_{n_0}(\d \lambda)}.
  \end{equation}
  Since $\left\lVert \Big\| \sup_{\eta \in \mathcal{B}^r} \frac{|\langle \mathcal{R}f - \Pi_xf(x), \eta^\lambda_x \rangle|}{\lambda^\gamma} \Big\|_{L^{\frac{p}{2}}(\d x)} \right\rVert_{L^{\frac{p}{2}}_\lambda}$ is bounded by the sum of the $\ell^{\frac{p}{2}}(n_0 \in \N_0)$-norms of~\eqref{eq:the sum up to n0} and~\eqref{eq:the sum up after n0}, it suffices to establish the bound~\eqref{eq:Bound of reconstruction operator for Besov modelled distribution} for the $\ell^{\frac{p}{2}}(n_0 \in \N_0)$-norm of each term. (One can easily bound the terms in the expansion of $\mathcal{R}f - \Pi_xf(x)$ involved with $\varphi^0_y$ by using a similar argument.)  \smallskip
  
  \textit{Step~1:} We first give an estimate for the Term~\eqref{eq:the sum up to n0}. As in the proof of \cite[Theorem 3.1]{Hairer2017}, we note that for $\lambda \in (2^{-n_0-1},2^{-n_0}]$ and $n \le n_0$ one has $|\langle \psi^n_y , \eta^{\lambda}_x \rangle| \lesssim 2^{n/2}$ uniformly over all $y \in \Lambda_n$, $\eta \in \mathcal{B}^r$, $x \in \R$ and $n \le n_0$. Moreover, this inner product vanishes as soon as $|x - y| > C2^{-n}$ for some constant~$C$. Hence, inserting Inequality~\eqref{eq:Bound for the inner product against single mother wavalet} we obtain that
  \begin{align*}
    &\left\lVert \Big\| \sup_{\eta \in \mathcal{B}^r}\frac{|\sum_{n \le n_0}\sum_{y \in \Lambda_n}\langle \mathcal{R}f - \Pi_xf(x), \psi^n_y \rangle \langle \psi^n_y, \eta^\lambda_x \rangle |}{\lambda^\gamma} \Big\|_{L^{\frac{p}{2}}(\d x)} \right\rVert_{L^{\frac{p}{2}}_{n_0}(\d \lambda)} \\
    &\qquad\lesssim \sum_{n \le n_0} \Big\|\sum_{y \in \Lambda_n, |y - x| \le C2^{-n}}\int_{B(y,2^{-n})} 2^{n}\frac{|Y_z - Y_x||\langle \dot{W}, \psi^n_y \rangle|}{2^{-n_0\gamma -\frac{n}{2}}} \dd z \Big\|_{L^{\frac{p}{2}}(\d x)} \\
    &\qquad\lesssim \sum_{n \le n_0} 2^{(n_0-n)\gamma}\Big\|\int_{B(x,C^\prime 2^{-n})} 2^{n}\frac{|Y_z - Y_x|\sum_{y \in \Lambda_n, |y - x| \le C2^{-n}}|\langle \dot{W}, \psi^n_y \rangle|}{2^{-n\gamma -\frac{n}{2}}} \dd z \Big\|_{L^{\frac{p}{2}}(\d x)} \\
    &\qquad\lesssim \sum_{n \le n_0} 2^{(n_0-n)\gamma}\Big\|\int_{B(0,C^\prime 2^{-n})} 2^{n}\frac{|Y_{x+h} - Y_x|}{2^{-n\alpha}} \dd h\sum_{y \in \Lambda_n, |y - x| \le C2^{-n}}\frac{|\langle \dot{W}, \psi^n_y \rangle|}{2^{-n(\alpha - 1) -\frac{n}{2}}} \Big\|_{L^{\frac{p}{2}}(\d x)},
  \end{align*}
  where we used $\gamma = 2\alpha - 1 = \alpha + (\alpha - 1)$ in the last line.
  
  For each $n \le n_0$, by the above observation we can further deduce that
  \begin{align*}
    &\Big\|\int_{B(0,C^\prime 2^{-n})} 2^{n}\frac{|Y_{x+h} - Y_x|}{2^{-n\alpha}} \dd h\sum_{y \in \Lambda_n, |y - x| \le C2^{-n}}\frac{|\langle \dot{W}, \psi^n_y \rangle|}{2^{-n(\alpha - 1) -\frac{n}{2}}} \Big\|_{L^{\frac{p}{2}}(\d x)} \\
    %&\quad=\Big(\int_{\R}\Big(\int_{B(0,C^\prime 2^{-n})}2^{n}\frac{|Y_{x+h} - Y_x|}{2^{-n\alpha}} \dd h\Big)^{\frac{p}{2}}\Big(\sum_{y \in \Lambda_n, |y - x| \le C2^{-n}}\frac{|\langle \dot{W}, \psi^n_y \rangle|}{2^{-n(\alpha - 1) -\frac{n}{2}}}\Big)^{\frac{p}{2}} \dd x \Big)^{\frac{2}{p}} \\
   % &\quad\lesssim \Big(\int_{\R}\int_{B(0,C^\prime 2^{-n})}2^{n}\Big(\frac{|Y_{x+h} - Y_x|}{2^{-n\alpha}}\Big)^{\frac{p}{2}} \Big(\sum_{y \in \Lambda_n, |y - x| \le C2^{-n}}\frac{|\langle \dot{W}, \psi^n_y \rangle|}{2^{-n(\alpha - 1) -\frac{n}{2}}}\Big)^{\frac{p}{2}} \dd h\dd x \Big)^{\frac{2}{p}}\\
    &\quad\lesssim \int_{B(0,C^\prime 2^{-n})}2^{n} \Big(\int_{\R}\Big(\frac{|Y_{x+h} - Y_x|}{2^{-n\alpha}}\Big)^{\frac{p}{2}}\Big(\sum_{y \in \Lambda_n, |y - x| \le C2^{-n}}\frac{|\langle \dot{W}, \psi^n_y \rangle|}{2^{-n(\alpha - 1) -\frac{n}{2}}}\Big)^{\frac{p}{2}} \dd x\Big)^{\frac{2}{p}} \dd h \\
    %&\quad\lesssim \int_{B(0,C^\prime 2^{-n})}2^{n} \Big(\Big[\int_{\R}\Big(\frac{|Y_{x+h} - Y_x|}{2^{-n\alpha}}\Big)^{p} \dd x\Big]^{\frac{1}{2}} \Big[\int_{\R}\Big(\sum_{y \in \Lambda_n, |y - x| \le C2^{-n}}\frac{|\langle \dot{W}, \psi^n_y \rangle|}{2^{-n(\alpha - 1) -\frac{n}{2}}}\Big)^{p} \dd x\Big]^{\frac{1}{2}}\Big)^{\frac{2}{p}} \dd h \\
    &\quad \lesssim \int_{B(0,C^\prime 2^{-n})}2^{n} \Big(\int_{\R}\Big(\frac{|Y_{x+h} - Y_x|}{2^{-n\alpha}}\Big)^{p} \dd x\Big)^{\frac{1}{p}} \Big(\int_{\R}\Big(\sum_{y \in \Lambda_n, |y - x| \le C2^{-n}}\frac{|\langle \dot{W}, \psi^n_y \rangle|}{2^{-n(\alpha - 1) -\frac{n}{2}}}\Big)^{p} \dd x\Big)^{\frac{1}{p}} \dd h,
  \end{align*}
  where we used Minkowski's integral inequality for the measures $\dd x$ and $2^n\dd h$ on ${B(0,C^\prime2^{-n})}$, and the H{\"o}lder inequality in the last inequality.
  
  Now we look at the term $\int_{\R}\Big(\sum_{y \in \Lambda_n, |y - x| \le C2^{-n}}\frac{|\langle \dot{W}, \psi^n_y \rangle|}{2^{-n(\alpha - 1) -\frac{n}{2}}}\Big)^{p} \dd x$. It can be written as
  \begin{equation*}
    \sum_{z \in \Lambda_n} \int_{x \in B(z,2^{-n-1})} \Big(\sum_{y \in \Lambda_n, |y - x| \le C2^{-n}}\frac{|\langle \dot{W}, \psi^n_y \rangle|}{2^{-n(\alpha - 1) -\frac{n}{2}}}\Big)^{p} \dd x,
  \end{equation*}
  which can be bounded by $\sum_{z \in \Lambda_n} \int_{x \in B(z,2^{-n-1})} \Big(\sum_{y \in \Lambda_n, |y - z| \le C^\prime 2^{-n}}\frac{|\langle \dot{W}, \psi^n_y \rangle|}{2^{-n(\alpha - 1) -\frac{n}{2}}}\Big)^{p} \dd x$ for some suitable constant $C^\prime$ independent of $z \in \Lambda_n$. Therefore, we have
  \begin{equation*}
    \int_{\R}\Big(\sum_{y \in \Lambda_n, |y - x| \le C2^{-n}}\frac{|\langle \dot{W}, \psi^n_y \rangle|}{2^{-n(\alpha - 1) -\frac{n}{2}}}\Big)^{p} \dd x \le \sum_{z \in \Lambda_n} 2^{-n}\Big(\sum_{y \in \Lambda_n, |y - z| \le C^\prime 2^{-n}}\frac{|\langle \dot{W}, \psi^n_y \rangle|}{2^{-n(\alpha - 1) -\frac{n}{2}}}\Big)^{p}.
  \end{equation*}
  Since the cardinality of $\{y \in \Lambda_n, |y - z| \le C^\prime 2^{-n}\}$ is controlled by $C^\prime$, it yields that 
  \begin{equation*}
    \Big(\sum_{y \in \Lambda_n, |y - z| \le C^\prime 2^{-n}}\frac{|\langle \dot{W}, \psi^n_y \rangle|}{2^{-n(\alpha - 1) -\frac{n}{2}}}\Big)^{p} \lesssim \sum_{y \in \Lambda_n, |y - z| \le C^\prime 2^{-n}}\Big(\frac{|\langle \dot{W}, \psi^n_y \rangle|}{2^{-n(\alpha - 1) -\frac{n}{2}}}\Big)^{p}
  \end{equation*}
  and then a basic combinatorial argument gives that
  \begin{equation*}
    \sum_{z \in \Lambda_n} 2^{-n}\sum_{y \in \Lambda_n, |y - z| \le C^\prime 2^{-n}}\Big(\frac{|\langle \dot{W}, \psi^n_y \rangle|}{2^{-n(\alpha - 1) -\frac{n}{2}}}\Big)^{p} \lesssim \sum_{y \in \Lambda_n} 2^{-n}\Big(\frac{|\langle \dot{W}, \psi^n_y \rangle|}{2^{-n(\alpha - 1) -\frac{n}{2}}}\Big)^{p} = \Big\|\frac{|\langle \dot{W}, \psi^n_y \rangle|}{2^{-n(\alpha - 1) -\frac{n}{2}}}\Big\|_{\ell^p_n}^p.
  \end{equation*}
  Hence, what we finally obtained is
  \begin{align*}
    &\left\lVert \Big\| \sup_{\eta \in \mathcal{B}^r}\frac{|\sum_{n \le n_0}\sum_{y \in \Lambda_n}\langle \mathcal{R}f - \Pi_xf(x), \psi^n_y \rangle \langle \psi^n_y, \eta^\lambda_x \rangle |}{\lambda^\gamma} \Big\|_{L^{\frac{p}{2}}(\d x)} \right\rVert_{L^{\frac{p}{2}}_{n_0}(\d \lambda)} \\
    &\qquad\quad\lesssim \sum_{n \le n_0} 2^{(n_0-n)\gamma}\int_{B(0,C^\prime 2^{-n})}2^{n} \Big\|\frac{|Y_{x+h} - Y_x|}{|h|^{\alpha}}\Big\|_{L^p(\d x)} \Big\|\frac{|\langle \dot{W}, \psi^n_y \rangle|}{2^{-n(\alpha - 1) -\frac{n}{2}}}\Big\|_{\ell^p_n} \dd h.
  \end{align*}
  As a consequence, the $\ell^{\frac{p}{2}}(n_0 \in \N_0)$-norm of~\eqref{eq:the sum up to n0} is bounded by
  \begin{align*}
    \Big( &\sum_{n_0 \ge 0} \Big(\sum_{n \le n_0} 2^{(n_0-n)\gamma}\int_{B(0,C^\prime 2^{-n})}2^{n} \Big\|\frac{|Y_{x+h} - Y_x|}{|h|^{\alpha}}\Big\|_{L^p(\d x)} \Big\|\frac{|\langle \dot{W}, \psi^n_y \rangle|}{2^{-n(\alpha - 1) -\frac{n}{2}}}\Big\|_{\ell^p_n} \dd h \Big)^{\frac{p}{2}}\Big)^{\frac{2}{p}} \\
    &\lesssim \Big( \sum_{n_0 \ge 0} \sum_{n \le n_0} 2^{(n_0-n)\gamma}\Big(\int_{B(0,C^\prime 2^{-n})}2^{n} \Big\|\frac{|Y_{x+h} - Y_x|}{|h|^{\alpha}}\Big\|_{L^p(\d x)} \Big\|\frac{|\langle \dot{W}, \psi^n_y \rangle|}{2^{-n(\alpha - 1) -\frac{n}{2}}}\Big\|_{\ell^p_n} \dd h \Big)^{\frac{p}{2}} \Big)^{\frac{2}{p}} \\
    %&\lesssim \Big( \sum_{n_0 \ge 0} \sum_{n \le n_0} 2^{(n_0-n)\gamma}\int_{B(0,C^\prime 2^{-n})}2^{n} \Big(\Big\|\frac{|Y_{x+h} - Y_x|}{|h|^{\alpha}}\Big\|_{L^p(\d x)}\Big)^{\frac{p}{2}} \Big(\Big\|\frac{|\langle \dot{W}, \psi^n_y \rangle|}{2^{-n(\alpha - 1) -\frac{n}{2}}}\Big\|_{\ell^p_n}\Big)^{\frac{p}{2}} \dd h \Big)^{\frac{2}{p}} \\
    &\lesssim \Big( \sum_{n \ge 0} \Big(\int_{B(0,C^\prime 2^{-n})}2^{n} \Big(\Big\|\frac{|Y_{x+h} - Y_x|}{|h|^{\alpha}}\Big\|_{L^p(\d x)}\Big)^{\frac{p}{2}} \dd h\Big)\Big(\Big\|\frac{|\langle \dot{W}, \psi^n_y \rangle|}{2^{-n(\alpha - 1) -\frac{n}{2}}}\Big\|_{\ell^p_n}\Big)^{\frac{p}{2}} \Big)^{\frac{2}{p}} \\
    &\lesssim \Big(\Big( \sum_{n \ge 0} \Big(\int_{B(0,C^\prime 2^{-n})}2^{n} \Big(\Big\|\frac{|Y_{x+h} - Y_x|}{|h|^{\alpha}}\Big\|_{L^p(\d x)}\Big)^{\frac{p}{2}} \dd h\Big)^2\Big)^{\frac{1}{2}}\Big(\sum_{n \ge 0} \Big\|\frac{|\langle \dot{W}, \psi^n_y \rangle|}{2^{-n(\alpha - 1) -\frac{n}{2}}}\Big\|_{\ell^p_n}^{p}\Big)^{\frac{1}{2}} \Big)^{\frac{2}{p}} \\ 
    &\lesssim \Big( \sum_{n \ge 0} \int_{B(0,C^\prime 2^{-n})}2^{n} \Big\|\frac{|Y_{x+h} - Y_x|}{|h|^{\alpha}}\Big\|_{L^p(\d x)}^{p} \dd h\Big)^{\frac{1}{p}}\Big(\sum_{n \ge 0} \Big\|\frac{|\langle \dot{W}, \psi^n_y \rangle|}{2^{-n(\alpha - 1) -\frac{n}{2}}}\Big\|_{\ell^p_n}^{p}\Big)^{\frac{1}{p}} \\
    &\lesssim \Big(\int_{B(0,C^\prime)} \Big\|\frac{|Y_{x+h} - Y_x|}{|h|^{\alpha}}\Big\|_{L^p(\d x)}^{p}\, \frac{\d h}{|h|}\Big)^{\frac{1}{p}}\Big(\sum_{n \ge 0} \Big\|\frac{|\langle \dot{W}, \psi^n_y \rangle|}{2^{-n(\alpha - 1) -\frac{n}{2}}}\Big\|_{\ell^p_n}^{p}\Big)^{\frac{1}{p}} \\
    &\lesssim \vertiii{f}_{\gamma,p,p} \|\Pi\|_p,
  \end{align*}
  where we used Jensen's inequality for the finite discrete measure $n \in \{0,\dots,n_0\} \mapsto 2^{(n_0-n)\gamma}$ (as $\gamma = 2\alpha -1 <0$) in the second line, Jensen's inequality for the finite measure  $2^n\dd h$ on ${B(0,C^\prime2^{-n})}$ in the third line, H\"older's inequality of the type $\sum |a_nb_n| \le (\sum a_n^2)^{\frac{1}{2}}(\sum b_n^2)^{\frac{1}{2}}$ in the fourth line and again Jensen's inequality for $2^n\dd h$ on ${B(0,C^\prime2^{-n})}$ in the sixth line. We also note that $\Big(\int_{B(0,C^\prime)} \|\frac{|Y_{x+h} - Y_x|}{|h|^{\alpha}}\|_{L^p(\d x)}^{p} \frac{\dd h}{|h|}\Big)^{1/p}$ is the translation bound of the modelled distribution $f$ (so that it can be controlled by $\vertiii{f}_{\gamma,p,p}$) and by \cite[Proposition~2.4]{Hairer2017} the term
  \begin{equation*}
    \Big(\sum_{n \ge 0} \Big\|\frac{|\langle \dot{W}, \psi^n_y \rangle|}{2^{-n(\alpha - 1) -\frac{n}{2}}}\Big\|_{\ell^p_n}^{p}\Big)^{\frac{1}{p}}
  \end{equation*}
  is an equivalent Sobolev norm of $\dot{W} \in W^{\alpha - 1}_p$ which is also the norm of $\Pi$ in the sense of Definition~\ref{def:Sobolev model}.\smallskip
  
  \textit{Step~2:} Now we turn to the Term~\eqref{eq:the sum up after n0}:
  \begin{equation*}
    \left\lVert \Big\| \sup_{\eta \in \mathcal{B}^r}\frac{|\sum_{n > n_0}\sum_{y \in \Lambda_n}\langle \mathcal{R}f - \Pi_xf(x), \psi^n_y \rangle \langle \psi^n_y, \eta^\lambda_x \rangle |}{\lambda^\gamma} \Big\|_{L^{\frac{p}{2}}(\d x)} \right\rVert_{L^{\frac{p}{2}}_{n_0}(\d \lambda)}.
  \end{equation*}
  For $\lambda \in (2^{-n_0-1},2^{-n_0}]$ and $n > n_0$, we have
  \begin{equation*}
    |\langle \psi^n_y , \eta^{\lambda}_x \rangle| \lesssim 2^{-\frac{n}{2} - rn}2^{n_0(1+r)}
  \end{equation*}
  uniformly over all $y \in \Lambda_n$, $\eta \in \mathcal{B}^r$, $x \in \R$ and $n > n_0$. Moreover, this inner product can make contributions only when $|y - x| \le C2^{-n_0}$ for some constant~$C$. Hence, combining this with Estimate~\eqref{eq:Bound for the inner product against single mother wavalet} we get
  \begin{align*}
    &\left\lVert \Big\| \sup_{\eta \in \mathcal{B}^r}\frac{|\sum_{n > n_0}\sum_{y \in \Lambda_n}\langle \mathcal{R}f - \Pi_xf(x), \psi^n_y \rangle \langle \psi^n_y, \eta^\lambda_x \rangle |}{\lambda^\gamma} \Big\|_{L^{\frac{p}{2}}(\d x)} \right\rVert_{L^{\frac{p}{2}}_{n_0}(\d \lambda)}  \\
    &\qquad\lesssim \sum_{n > n_0} 2^{(n_0-n)(r + \alpha - 1)}\Big\|\sum_{y \in \Lambda_n, |y - x| \le C2^{-n_0}}\int_{B(y,2^{-n})}2^{n_0} \frac{|Y_z - Y_x|}{2^{-n_0\alpha}}\frac{|\langle \dot{W}, \psi^n_y \rangle|}{2^{-n(\alpha - 1) - \frac{n}{2}}} \dd z \Big\|_{L^{\frac{p}{2}}(\d x)}.
  \end{align*}
  Since 
  \begin{align*}
    \sum_{y \in \Lambda_n, |y - x| \le C2^{-n_0}} & \int_{B(y,2^{-n})}2^{n_0} \frac{|Y_z - Y_x|}{2^{-n_0\alpha}}\frac{|\langle \dot{W}, \psi^n_y \rangle|}{2^{-n(\alpha - 1) - \frac{n}{2}}} \dd z  \\
    &\lesssim  \int_{B(0,C^\prime 2^{-n_0})}2^{n_0}\frac{|Y_{x+h} - Y_x|}{2^{-n_0\alpha}} \dd h
    \,\Big(\max_{y \in \Lambda_n, |y - x| \le C2^{-n_0}}\frac{|\langle \dot{W}, \psi^n_y \rangle|}{2^{-n(\alpha - 1) - \frac{n}{2}}}\Big),
  \end{align*}
  holds for each $n > n_0$, we can deduce that
  \begin{align*}
    &\Big\|\sum_{y \in \Lambda_n, |y - x| \le C2^{-n_0}}\int_{B(y,2^{-n})}2^{n_0} \frac{|Y_z - Y_x|}{2^{-n_0\alpha}}\frac{|\langle \dot{W}, \psi^n_y \rangle|}{2^{-n(\alpha - 1) - \frac{n}{2}}} \dd z \Big\|_{L^{\frac{p}{2}}(\d x)} \\
   % &\qquad\lesssim \Big( \int_{\R} \int_{B(0,C^\prime 2^{-n_0})}2^{n_0}\Big( \frac{|Y_{x+h} - Y_x|}{2^{-n_0\alpha}} \Big)^{\frac{p}{2}}\Big(\max_{y \in \Lambda_n, |y - x| \le C2^{-n_0}}\frac{|\langle \dot{W}, \psi^n_y \rangle|}{2^{-n(\alpha - 1) - \frac{n}{2}}}\Big)^{\frac{p}{2}} \dd h \dd x \Big)^{\frac{2}{p}} \\
    &\qquad\lesssim \int_{B(0,C^\prime 2^{-n_0})}2^{n_0}\Big(\int_{\R} \Big(\frac{|Y_{x+h} - Y_x|}{2^{-n_0\alpha}} \Big)^{\frac{p}{2}}\Big(\max_{y \in \Lambda_n, |y - x| \le C2^{-n_0}}\frac{|\langle \dot{W}, \psi^n_y \rangle|}{2^{-n(\alpha - 1) - \frac{n}{2}}}\Big)^{\frac{p}{2}} \dd x \Big)^{\frac{2}{p}} \dd h,
  \end{align*}
  where we used  the Minkowski's integral inequality as in Step~1. Then by H\"older's inequality, we find that
  \begin{align*}
    &\Big(\int_{\R} \Big(\frac{|Y_{x+h} - Y_x|}{2^{-n_0\alpha}} \Big)^{\frac{p}{2}}\Big(\max_{y \in \Lambda_n, |y - x| \le C2^{-n_0}}\frac{|\langle \dot{W}, \psi^n_y \rangle|}{2^{-n(\alpha - 1) - \frac{n}{2}}}\Big)^{\frac{p}{2}} \dd x \Big)^{\frac{2}{p}} \\
    &\qquad\quad\lesssim \Big(\int_{\R}\Big(\frac{|Y_{x+h} - Y_x|}{2^{-n_0\alpha}} \Big)^{p} \dd x\Big)^{\frac{1}{p}}\Big(\int_{\R}  \Big(\max_{y \in \Lambda_n, |y - x| \le C2^{-n_0}}\frac{|\langle \dot{W}, \psi^n_y \rangle|}{2^{-n(\alpha - 1) - \frac{n}{2}}}\Big)^{p} \dd x  \Big)^{\frac{1}{p}}.
  \end{align*}
  Next we consider the integral $\int_{\R}  \Big(\max_{y \in \Lambda_n, |y - x| \le C2^{-n_0}}\frac{|\langle \dot{W}, \psi^n_y \rangle|}{2^{-n(\alpha - 1) - \frac{n}{2}}}\Big)^{p} \dd x$. As before, we rewrite it as
  \begin{equation*}
    \sum_{z \in \Lambda_n} \int_{x \in B(z,2^{-n-1})} \Big(\max_{y \in \Lambda_n, |y - x| \le C2^{-n_0}}\frac{|\langle \dot{W}, \psi^n_y \rangle|}{2^{-n(\alpha - 1) - \frac{n}{2}}}\Big)^{p} \dd x,
  \end{equation*}
  and observe the estimate
  \begin{align*}
    \sum_{z \in \Lambda_n} \int_{x \in B(z,2^{-n-1})} \Big(\max_{y \in \Lambda_n, |y - x| \le C2^{-n_0}}
    &\frac{|\langle \dot{W}, \psi^n_y \rangle|}{2^{-n(\alpha - 1) - \frac{n}{2}}}\Big)^{p} \dd x \\
    % &\le \sum_{z \in \Lambda_n} 2^{-n} \Big(\max_{y \in \Lambda_n, |y - z| \le C^\prime 2^{-n_0}}\frac{|\langle \dot{W}, \psi^n_y \rangle|}{2^{-n(\alpha - 1) - \frac{n}{2}}}\Big)^{p} \\
    &\le \sum_{z \in \Lambda_n} 2^{-n} \sum_{y \in \Lambda_n, |y - z| \le C^\prime 2^{-n_0}}\Big(\frac{|\langle \dot{W}, \psi^n_y \rangle|}{2^{-n(\alpha - 1) - \frac{n}{2}}}\Big)^{p}
  \end{align*}
  for some constant $C^\prime$. Since the number of $y \in \Lambda_n$ such that $|y - z| \le C^\prime 2^{-n_0}$ is of order $2^{n-n_0}$ for $n > n_0$ uniformly over all $z \in \Lambda_n$, we count every $y \in \Lambda_n$ for (a multiple of) $2^{n-n_0}$ times. This implies that
  \begin{equation*}
    \sum_{z \in \Lambda_n} 2^{-n} \sum_{y \in \Lambda_n, |y - z| \le C^\prime 2^{-n_0}}\Big(\frac{|\langle \dot{W}, \psi^n_y \rangle|}{2^{-n(\alpha - 1) - \frac{n}{2}}}\Big)^{p}
    \lesssim 2^{n-n_0} \sum_{y \in \Lambda_n}2^{-n}\Big(\frac{|\langle \dot{W}, \psi^n_y \rangle|}{2^{-n(\alpha - 1) - \frac{n}{2}}}\Big)^{p}
  \end{equation*}
  and hence
  \begin{equation*}
    \Big(\int_{\R}  \Big(\max_{y \in \Lambda_n, |y - x| \le C2^{-n_0}}\frac{|\langle \dot{W}, \psi^n_y \rangle|}{2^{-n(\alpha - 1) - \frac{n}{2}}}\Big)^{p} \dd x  \Big)^{\frac{1}{p}} \lesssim 2^{(n-n_0)\frac{1}{p}} \Big\|\frac{|\langle \dot{W}, \psi^n_y \rangle|}{2^{-n(\alpha - 1) - \frac{n}{2}}}\Big\|_{\ell^p_n}.
  \end{equation*}
  So, finally we obtain that
  \begin{align*}
    &\left\lVert \Big\| \sup_{\eta \in \mathcal{B}^r}\frac{|\sum_{n > n_0}\sum_{y \in \Lambda_n}\langle \mathcal{R}f - \Pi_xf(x), \psi^n_y \rangle \langle \psi^n_y, \eta^\lambda_x \rangle |}{\lambda^\gamma} \Big\|_{L^{\frac{p}{2}}(\d x)} \right\rVert_{L^{\frac{p}{2}}_{n_0}(\d \lambda)}  \\
    &\quad\lesssim \sum_{n > n_0} 2^{(n_0-n)(r + \alpha - 1 - \frac{1}{p})}\Big(\int_{h \in B(0,C^\prime 2^{-n_0})}2^{n_0} \Big\|\frac{Y_{x+h} - Y_{x}}{|h|^\alpha}\Big\|_{L^p(\d x)} \dd h \Big) \Big(\Big\|\frac{|\langle \dot{W}, \psi^n_y \rangle|}{2^{-n(\alpha - 1) - \frac{n}{2}}}\Big\|_{\ell^p_n} \Big).
  \end{align*}
  Thanks to our choice of~$r$ (that is $r > |\alpha - 1 - \frac{1}{p}|$), for $\theta := r + \alpha - 1 - \frac{1}{p}$ the discrete measure 
  \begin{equation*}
    n \in \{n_0 + 1,\dots\} \mapsto 2^{(n_0 - n)\theta}
  \end{equation*}
  has finite total mass independent of~$n_0$, hence by Jensen's inequality and we can get that
  \begin{align*}
    &\Big( \sum_{n_0 \ge 0} \Big(\sum_{n > n_0} 2^{(n_0-n)\theta}\Big(\int_{h \in B(0,C^\prime 2^{-n_0})}2^{n_0} \Big\|\frac{Y_{x+h} - Y_{x}}{|h|^\alpha}\Big\|_{L^p(\d x)} \dd h \Big) \Big(\Big\|\frac{|\langle \dot{W}, \psi^n_y \rangle|}{2^{-n(\alpha - 1) - \frac{n}{2}}}\Big\|_{\ell^p_n} \Big) \Big)^{\frac{p}{2}} \Big)^{\frac{2}{p}} \\
    %&\lesssim \Big(\sum_{n_0 \ge 0} \sum_{n > n_0} 2^{(n_0-n)\theta}\Big(\int_{h \in B(0,C^\prime 2^{-n_0})}2^{n_0} \Big\|\frac{Y_{x+h} - Y_{x}}{|h|^\alpha}\Big\|_{L^p(\d x)} \dd h \Big)^{\frac{p}{2}} \Big(\Big\|\frac{|\langle \dot{W}, \psi^n_y \rangle|}{2^{-n(\alpha - 1) - \frac{n}{2}}}\Big\|_{\ell^p_n} \Big)^{\frac{p}{2}} \Big)^{\frac{2}{p}} \\
    %&\lesssim \Big(\sum_{n_0 \ge 0} \sum_{n > n_0} 2^{(n_0-n)\theta}\Big(\int_{h \in B(0,C^\prime 2^{-n_0})}2^{n_0} \Big\|\frac{Y_{x+h} - Y_{x}}{|h|^\alpha}\Big\|_{L^p(\d x)}^{\frac{p}{2}} \dd h \Big)\Big(\Big\|\frac{|\langle \dot{W}, \psi^n_y \rangle|}{2^{-n(\alpha - 1) - \frac{n}{2}}}\Big\|_{\ell^p_n} \Big)^{\frac{p}{2}} \Big)^{\frac{2}{p}} \\
    &\lesssim \Big(\sum_{n\ge 0} \Big(\Big\|\frac{|\langle \dot{W}, \psi^n_y \rangle|}{2^{-n(\alpha - 1) - \frac{n}{2}}}\Big\|_{\ell^p_n} \Big)^{\frac{p}{2}}\sum_{n_0=0}^n 2^{(n_0-n)\theta}\Big(\int_{h \in B(0,C^\prime 2^{-n_0})}2^{n_0} \Big\|\frac{Y_{x+h} - Y_{x}}{|h|^\alpha}\Big\|_{L^p(\d x)}^{\frac{p}{2}} \dd h \Big) \Big)^{\frac{2}{p}} \\
    &\lesssim  \Big(\sum_{n\ge 0} \Big\|\frac{|\langle \dot{W}, \psi^n_y \rangle|}{2^{-n(\alpha - 1) - \frac{n}{2}}}\Big\|_{\ell^p_n}^{p}\Big)^{\frac{1}{p}}\Big(\sum_{n \ge 0}\Big(\sum_{n_0=0}^n 2^{(n_0-n)\theta}\int_{h \in B(0,C^\prime 2^{-n_0})}2^{n_0} \Big\|\frac{Y_{x+h} - Y_{x}}{|h|^\alpha}\Big\|_{L^p(\d x)}^{\frac{p}{2}} \dd h \Big)^2\Big)^{\frac{1}{p}} \\
    &\lesssim \Big(\sum_{n\ge 0} \Big\|\frac{|\langle \dot{W}, \psi^n_y \rangle|}{2^{-n(\alpha - 1) - \frac{n}{2}}}\Big\|_{\ell^p_n}^{p}\Big)^{\frac{1}{p}}\Big(\sum_{n \ge 0}\sum_{n_0 = 0}^n 2^{(n_0-n)\theta}\int_{h \in B(0,C^\prime 2^{-n_0})}2^{n_0} \Big\|\frac{Y_{x+h} - Y_{x}}{|h|^\alpha}\Big\|_{L^p(\d x)}^{p} \dd h \Big)^{\frac{1}{p}} \\
    &\lesssim \Big(\sum_{n\ge 0} \Big\|\frac{|\langle \dot{W}, \psi^n_y \rangle|}{2^{-n(\alpha - 1) - \frac{n}{2}}}\Big\|_{\ell^p_n}^{p}\Big)^{\frac{1}{p}} \Big(\int_{h \in B(0,C^\prime)} \Big\|\frac{Y_{x+h} - Y_{x}}{|h|^\alpha}\Big\|_{L^p(\d x)}^{p} \frac{\dd h}{|h|}\Big)^{\frac{1}{p}} \\
    &\lesssim \vertiii{f}_{\gamma,p,p}\|\Pi\|_{p},
  \end{align*}
  where we used Jensen's inequality and H\"older's inequality in the same way as in Step~1. Hence, we showed that the $\ell^{\frac{p}{2}}_{n_0}$-norm of~\eqref{eq:the sum up after n0} is also bounded by $\vertiii{f}_{\gamma,p,p}\|\Pi\|_{p}$, as claimed.
\end{proof}
 
With these two lemmas at hand, we are in a position to prove Theorem~\ref{thm:Sobolev path lift}, which ensures the existence of a Sobolev rough path lift above a Sobolev path.

\begin{proof}[Proof of Theorem~\ref{thm:Sobolev path lift}]
  Without loss of generality we set $T=1$ keeping in mind that there is a smooth transformation between $[0,T]$ and $[0,1]$. Moreover, we set $f_{s,t}:=f_t -f_s$ for the increment of a function $f\colon [0,T]\to \R^d$, where $s,t\in [0,T]$.

  In view of Lemma~\ref{lem:regularity of Rf} and Lemma~\ref{lem:Bound in reconstruction theorem w.r.t. Sobolev model}, there exists a distribution $\dot{Z} := \mathcal{R}\dot{\mathbb{Z}} \in W^{\alpha-1}_p$ such that
  \begin{equation*}
    \left\lVert \Big\| \sup_{\eta \in \mathcal{B}^r} \frac{|\langle \dot{Z} - \Pi_x\dot{\mathbb{Z}}(x), \eta^\lambda_x \rangle|}{\lambda^\gamma} \Big\|_{L^{\frac{p}{2}}(\d x)} \right\rVert_{L^{\frac{p}{2}}_\lambda} \lesssim 1.
  \end{equation*}
  Since $\Pi_x\dot{\mathbb{Z}}(x) = Y_x\dot{W}$ for all $x \in \R$, it holds that
  \begin{equation*}
    \left\lVert \Big\| \sup_{\eta \in \mathcal{B}^r} \frac{|\langle \dot{Z} - Y_x\dot{W}, \eta^\lambda_x \rangle|}{\lambda^\gamma} \Big\|_{L^{\frac{p}{2}}(\d x)} \right\rVert_{L^{\frac{p}{2}}_\lambda} \lesssim 1.
  \end{equation*}
  Then by Fubini's theorem we obtain that
  \begin{align}\label{eq:Sobolev type estimate based on reconstruction theorem}
  \begin{split}
    \left\lVert \Big\| \sup_{\eta \in \mathcal{B}^r} \frac{|\langle \dot{Z} - Y_x\dot{W}, \eta^\lambda_x \rangle|}{\lambda^\gamma} \Big\|_{L^{\frac{p}{2}}(\d x)} \right\rVert_{L^{\frac{p}{2}}_\lambda}^{\frac{p}{2}} 
    &= \int_0^1 \int_{\R} \Big(\sup_{\eta \in \mathcal{B}^r}\frac{|\langle \dot{Z} - Y_x\dot{W}, \eta^\lambda_x \rangle|}{\lambda^\gamma}\Big)^{\frac{p}{2}} \dd x\, \frac{\dd \lambda}{\lambda} \\
    &=\int_{\R} \int_0^1 \sup_{\eta \in \mathcal{B}^r}\frac{|\langle \dot{Z} - Y_x\dot{W}, \eta^\lambda_x \rangle|^{\frac{p}{2}}}{\lambda^{\gamma \frac{p}{2} + 1}}\dd \lambda \dd x \\
    &=\int_{\R} \int_{x}^{x+1} \sup_{\eta \in \mathcal{B}^r}\frac{|\langle \dot{Z} - Y_x\dot{W}, \eta^{(y-x)}_x \rangle|^{\frac{p}{2}}}{(y-x)^{\gamma \frac{p}{2} + 1}}\dd y \dd x,
  \end{split}
  \end{align}
  where in the last equality we used the change-of-variable $\lambda = y - x$ for every $x \in \R$. Now we choose $\eta := \1_{[0,1]}$ such that $\eta^{(y-x)}_x = \frac{1}{y-x}\1_{[x,y]}$ for $y \in (x,x+1]$, and then follow the same arguments in the relevant proof of  \cite[Theorem~4.6]{Brault2019} (more precisely, a straight forward calculation reveals that \cite[Lemma~3.10]{Brault2019} remains valid in the current Sobolev setup, from which one can easily establish the following bound for indicator function)  to show that the bound~\eqref{eq:Sobolev type estimate based on reconstruction theorem} remains valid for this indicator function and consequently
  \begin{equation*}
    \int_{\R} \int_{x}^{x+1} \frac{|\langle \dot{Z} - Y_x\dot{W}, \1_{[x,y]} \rangle|^{\frac{p}{2}}}{(y-x)^{(\gamma+1)\frac{p}{2} + 1}}\dd y \dd x \lesssim \left\lVert \Big\| \sup_{\eta \in \mathcal{B}^r} \frac{|\langle \dot{Z} - \Pi_x\dot{\mathbb{Z}}(x), \eta^\lambda_x \rangle|}{\lambda^\gamma} \Big\|_{L^{\frac{p}{2}}(\d x)} \right\rVert_{L^{\frac{p}{2}}_\lambda}^{\frac{p}{2}} \lesssim 1.
  \end{equation*}
  Since $\dot{Z} \in W^{\alpha-1}_p$, the primitive $Z$ of $\dot{Z}$, which is a distribution in $W^{\alpha}_p$, is continuous due to the classical embedding theorem. Hence, we can immediately check that 
  \begin{equation*}
    \langle \dot{Z} - Y_x\dot{W}, \1_{[x,y]} \rangle = Z_{x,y} - Y_x W_{x,y}
  \end{equation*}
  (by approximation, of course) and conclude
  \begin{equation}\label{eq:estimate from reconstruction theorem}
    \int_0^1 \int_{x}^{1} \frac{|Z_{x,y} - Y_x W_{x,y}|^{\frac{p}{2}}}{(y-x)^{\alpha p + 1}}\dd y \dd x \lesssim 1.
  \end{equation}
  Now we define $\mathbb{X}^{1,2}_{x,y} := Z_{x,y} - Y_x W_{x,y} $ on $\Delta := \{(x,y) \in \R^2 : 0 \le x \le y \le 1\}$, Estimate~\eqref{eq:estimate from reconstruction theorem} can be written as
  \begin{equation*}
    \iint_{\Delta}\frac{|\mathbb{X}^{1,2}_{x,y}|^{\frac{p}{2}}}{(y-x)^{\alpha p + 1}}\dd y \dd x \lesssim 1.
  \end{equation*}
  Similarly, we can obtain the same bound for $\mathbb{X}^{2,1}_{x,y} := Z_{x,y} - W_xY_{x,y}$, where now $Z$ denotes the primitive of $\mathcal{R}\dot{\mathbb{Z}}$ obtained from Lemma~\ref{lem:regularity of Rf} for the same regularity structure as before with $\dot{\mathbb{W}}$ replaced by $\dot{\mathbb{Y}}$ such that $\Pi_x\dot{\mathbb{Y}} = \dot{Y}$ and $\dot{\mathbb{Z}}(x) := W_x\dot{\mathbb{Y}}$. The notations $\mathbb{X}^{1,1}_{x,y} = Z_{x,y} - Y_xY_{x,y}$ and $\mathbb{X}^{2,2}_{x,y} = Z_{x,y} - W_x W_{x,y}$ are then self-explanatory (although we use $Z$ to denote different functions). Let $\mathbb{X}_{x,y} := \mathbb{X}^{i,j}_{x,y}$ for $x,y \in I$ and $i,j=1,2$, then Bound~\eqref{eq:estimate from reconstruction theorem} guarantees that
  \begin{equation}\label{eq:regularity second level}
    \int_0^1 \int_{0}^{1} \frac{|X_{x,y}|^p + |\mathbb{X}_{x,y}|^{\frac{p}{2}}}{|y-x|^{\alpha p + 1}}\dd y \dd x \lesssim 1.
  \end{equation}
  Moreover, we can immediately check that $\mathbb{X}$ satisfies Chen's relation by construction. Now, we define $F = (F^{i,j})_{i,j = 1,2}$, which is a continuous paths taking value in $\R^2 \otimes \R^2$ such that $F^{i,j}_{x} = F^{j,i}_{x} = \frac{1}{2}(X^i_{0,x}X^j_{0,x} - \mathbb{X}^{i,j}_{0,x} - \mathbb{X}^{j,i}_{0,x})$ for $x \in I$ and $i,j=1,2$ with $X^1= Y$ and $X^2 = W$. Then it is easy to check that $F^{i,j}_{x,y} = \frac{1}{2}(X^i_{x,y}X^j_{x,y} - \mathbb{X}^{i,j}_{x,y} - \mathbb{X}^{j,i}_{x,y})$ for any $(x,y) \in \Delta$ and $\X_{x} := (X_{0,x}, \mathbb{X}_{0,x} + F_{x})$ takes value in $G^2(\R^2)$ and
  \begin{equation}\label{eq:regularity of lift}
    \int_0^1 \int_{x}^{1} \frac{d_{cc}(\X_x,\X_y)^p}{(y-x)^{\alpha p + 1}}\dd y \dd x \lesssim 1,
  \end{equation}
  which indeed means that $\X \in W^{\alpha}_p([0,T];G^2(\R^2))$.
\end{proof}

\begin{remark}\label{rmk:new bounds}
  In the proof of Theorem~\ref{thm:Sobolev path lift} we have seen that the new Bound~\eqref{eq:Bound of reconstruction operator for Besov modelled distribution} was essential to obtain the Sobolev regularity of the rough path lift, see~\eqref{eq:regularity of lift}. This would not have been possible with the original bounds (cf.~\cite[Theorem~3.1]{Hairer2017} and \cite[Theorem~2.11]{Liu2016}) of the reconstruction operator relying on (standard) modelled distributions with H{\"o}lder bounds, which read in our case as
  \begin{equation*}
    \left\lVert \Big\| \sup_{\eta \in \mathcal{B}^r} \frac{|\langle \mathcal{R}\dot{\mathbb{Z}} - \Pi_x\dot{\mathbb{Z}}(x), \eta^\lambda_x \rangle|}{\lambda^\gamma} \Big\|_{L^{p}(\d x)} \right\rVert_{L^{p}_\lambda} \lesssim \|\Pi\|_{p}(1 + \|\Gamma\|_{p})\vertiii{\dot{\mathbb{Z}}}_{\gamma,p,p}.
  \end{equation*}
  This bound leads only to the regularity estimate 
  \begin{equation}\label{eq:remark regularity}
    \int_0^1 \int_{0}^{1} \frac{|X_{x,y}|^p + |\mathbb{X}_{x,y}|^{p}}{|y-x|^{\alpha p + 1}}\dd y \dd x \lesssim 1
  \end{equation}
  and not to the required Estimate~\eqref{eq:regularity second level}. Note, while the Estimate~\eqref{eq:remark regularity} gives the ``right'' regularity parameter of the second order term~$\mathbb{X}$, the integrability parameter is not the required one (here: $p$ instead of $p/2$).
\end{remark}

\begin{remark}
  As we have seen, lifting a path to a rough path based on Hairer's theory of regularity theory requires essentially the reconstruction operator for modelled distributions with negative regularity, which leads to the expected non-uniqueness of the rough path lift, cf. \cite{Hairer2014,Caravenna2020,Liu2016}.
\end{remark}

\subsection{Continuity of the rough path lifting map}

Let us conclude by showing that the method used to construct rough paths via Hairer's reconstruction theorem actually provides a \emph{continuous} way to lift $\R^d$-valued Sobolev paths to Sobolev rough paths of the same regularity. 

For this purpose the distance between two elements $\X^1$ and $\X^2$ in $W^\alpha_p([0,T];G^2(\R^d))$ will be measured with respect to the inhomogeneous Sobolev metric. The \textit{inhomogeneous Sobolev metric} $\rho_{W^{\alpha}_p}$ is defined by
\begin{equation*}
  \rho_{W^{\alpha}_p}(\X^1,\X^2) := \sum_{k=1,2}\rho^{(k)}_{W_p^{\alpha}}(\X^1,\X^2)
\end{equation*}
and for each $k$,
\begin{equation*}
  \rho^{(k)}_{W_p^{\alpha}}(\X^1,\X^2) := \Big(\int_0^{T}\int_0^T \frac{|\pi_k(\X^1_{s,t} - \X^2_{s,t})|^{p/k}}{|t - s|^{\alpha p + 1}} \dd s \dd t\Big)^{k/p}.
\end{equation*}
The inhomogeneous metrics play an important role in the theory of rough differential equations as, for instance, the It{\^o}--Lyons map is continuous with respect to inhomogeneous metrics, cf. \cite{Liu2021}. For a general discussion of inhomogeneous norms and distances in the rough path theory we refer to \cite[Chapter~8]{Friz2010}.

The next theorem is a generalization of \cite[Proposition~13.23]{Friz2014} (see also \cite[Theorem~4.6]{Brault2019}) from H{\"o}lder spaces to Sobolev spaces.

\begin{theorem}\label{thm:continuity of Sobolev rough path lift}
  Let $\alpha \in (1/3, 1/2)$ and $p \in (1,+\infty]$ be such that $\alpha > 1/p$. Then, there exists a map 
  \begin{align*}
    L\colon W^\alpha_p([0,T];\R^d) \to W^\alpha_p([0,T];G^2(\R^d)),\quad \text{via}\quad
    X \mapsto L(X) =: \X,
  \end{align*}
  such that $\X$ is a Sobolev rough path lift of $X$ and $L$ is locally Lipschitz continuous with respect to the inhomogeneous Sobolev metric $\rho_{W^{\alpha}_p}$.
\end{theorem}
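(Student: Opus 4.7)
The plan is to exploit the bilinearity of the reconstruction formula~\eqref{eq:reconstrcution operator for Besov model} in the pair (model, modelled distribution), combined with the sharp $L^{p/2}$-bound from Lemma~\ref{lem:Bound in reconstruction theorem w.r.t. Sobolev model}. Given $X^i = (Y^i, W^i) \in W^\alpha_p$ with lifts $\X^i := L(X^i)$ for $i = 1, 2$, the first-level distance $\rho^{(1)}_{W^\alpha_p}(\X^1, \X^2)$ equals $\|X^1 - X^2\|_{W^\alpha_p}$ by construction, so the real content lies in the second-level distance $\rho^{(2)}_{W^\alpha_p}$, which by definition measures the differences of the iterated components $\mathbb{X}^{i,j}$ in the Sobolev $L^{p/2}$-sense.

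The key observation is that, because $\Gamma = \mathrm{Id}_{\Tc}$ and $\Pi_x(Y_x \dot{\mathbb{W}}) = Y_x \dot{W}$ in the regularity structure of Example~\ref{ex:basic Sobolev model}, the reconstruction~\eqref{eq:reconstrcution operator for Besov model} depends bilinearly on the pair $(Y, W)$. Consequently, the difference $\mathcal{R}^1 \dot{\mathbb{Z}}^1 - \mathcal{R}^2 \dot{\mathbb{Z}}^2$ splits telescopically as $\mathcal{R}^{W^1}((Y^1 - Y^2)\dot{\mathbb{W}}) + \widetilde{\mathcal{R}}(Y^2 \dot{\mathbb{W}})$, where $\widetilde{\mathcal{R}}$ is the reconstruction associated to the ``difference model'' realizing $\dot{\mathbb{W}} \mapsto \dot{W}^1 - \dot{W}^2$. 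Since the estimate in Lemma~\ref{lem:Bound in reconstruction theorem w.r.t. Sobolev model} is linear in both $\|\Pi\|_p$ and $\vertiii{\dot{\mathbb{Z}}}_{\gamma, p, p}$, applying it to each telescoping piece yields
\begin{equation*}
  \left\lVert \Big\| \sup_{\eta \in \mathcal{B}^r} \frac{|\langle \mathcal{R}^1 \dot{\mathbb{Z}}^1 - \mathcal{R}^2 \dot{\mathbb{Z}}^2 - (Y^1_x \dot{W}^1 - Y^2_x \dot{W}^2), \eta^\lambda_x \rangle|}{\lambda^\gamma} \Big\|_{L^{p/2}(\d x)} \right\rVert_{L^{p/2}_\lambda} \lesssim C(X^1, X^2)\|X^1 - X^2\|_{W^\alpha_p},
\end{equation*}
with $C(X^1, X^2) \lesssim \|X^1\|_{W^\alpha_p} + \|X^2\|_{W^\alpha_p}$.

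Next I would feed this bound through the same indicator-function argument used after~\eqref{eq:Sobolev type estimate based on reconstruction theorem}: testing against $\eta = \1_{[0,1]}$ and changing variables $\lambda = y - x$ converts the reconstruction-level estimate into an $L^{p/2}$-Sobolev estimate for the differences $\mathbb{X}^{i,j,1}_{x,y} - \mathbb{X}^{i,j,2}_{x,y}$. Repeating the argument for each of the four components (swapping the roles of $Y$ and $W$ as at the end of the proof of Theorem~\ref{thm:Sobolev path lift}) and then assembling the $G^2(\R^2)$-valued correction path $F$ polynomially in $X$ and $\mathbb{X}$ gives
\begin{equation*}
  \rho_{W^\alpha_p}(\X^1, \X^2) \lesssim \big(1 + \|X^1\|_{W^\alpha_p} + \|X^2\|_{W^\alpha_p}\big) \|X^1 - X^2\|_{W^\alpha_p},
\end{equation*}
which is the required local Lipschitz continuity.

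The main obstacle I anticipate is the bookkeeping around the ``difference model'' $\widetilde{\mathcal{R}}$: it is not itself a bona fide model, but since Lemma~\ref{lem:Bound in reconstruction theorem w.r.t. Sobolev model} enters only through the linear dependence of its bound on $\|\Pi\|_p$, one simply has to verify that $\|\widetilde{\Pi}\|_p \lesssim \|W^1 - W^2\|_{W^\alpha_p}$ via the embedding of derivatives used in Example~\ref{ex:basic Sobolev model}. The $L^{p/2}$-integrability in~\eqref{eq:Bound of reconstruction operator for Besov modelled distribution} is essential for H\"older's inequality to recombine the two bilinear factors into the correct $L^p$-norm on the right-hand side; the original $L^p$-bound of the reconstruction theorem would not suffice, in complete analogy with Remark~\ref{rmk:new bounds}.
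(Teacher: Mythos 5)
Your proposal is correct and follows essentially the same route as the paper's proof. The telescoping decomposition $\mathcal{R}^1\dot{\mathbb{Z}}^1 - \mathcal{R}^2\dot{\mathbb{Z}}^2 = \mathcal{R}^{W^1}\bigl((Y^1-Y^2)\dot{\mathbb{W}}\bigr) + \widetilde{\mathcal{R}}(Y^2\dot{\mathbb{W}})$ is precisely what the paper carries out concretely, writing $\Pi_y(f(z)-f(x)-g(z)+g(x)) = (Y_z-Y_x-\tilde Y_z+\tilde Y_x)\dot W$ and $(\Pi_y-\tilde\Pi_y)(g(z)-g(x)) = (\tilde Y_z-\tilde Y_x)(\dot W-\dot{\tilde W})$ and then re-running the wavelet estimates of Lemma~\ref{lem:Bound in reconstruction theorem w.r.t. Sobolev model} on each piece; your remark that the ``difference model'' need not be a bona fide model because the lemma's bound enters only linearly through $\|\Pi\|_p$ captures the same point the paper handles implicitly by never naming $\Pi-\tilde\Pi$ a model. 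The only imprecision is the final sentence: the $L^{p/2}$-norms and the internal H\"older inequality are what give $\mathbb{X}$ the right Sobolev $L^{p/2}$-integrability (matching $\rho^{(2)}_{W^\alpha_p}$), not a recombination into $L^p$ on the right-hand side -- but this is a slip of phrasing, not a gap in the argument.
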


\begin{proof}
  Without loss of generality we again assume that $d = 2$ and $T=1$. Throughout the whole proof, we fix a wavelet analysis with father wavelet $\varphi$ and mother wavelet $\psi$ in $\mathcal{C}^r_0$ with $r > |\alpha - 1 -\frac{1}{p}|$, which satisfy the desired properties (1)-(5) for wavelet analysis introduced in Subsection~\ref{subsec:lift via regularity structures}. 
  
  First, let us briefly summarize how to get a rough path lift by using Theorem~\ref{thm:Sobolev path lift}: Let $X = (Y,W) \in W^\alpha_p([0,T];\R^d)$ be given. As we have shown in the proof of Theorem~\ref{thm:Sobolev path lift}, if we apply the Sobolev model introduced in Example~\ref{ex:basic Sobolev model} and define $f(t) := Y_t \dot{\mathbb{W}}$, then it holds that $f \in \mathcal{D}^\gamma_{p,p}$ with $\gamma = 2\alpha - 1$, and the distribution $\mathcal{R}f \in W^{\alpha-1}_p$ defined as in~\eqref{eq:reconstrcution operator for Besov model} satisfies Bound~\eqref{eq:Bound of reconstruction operator for Besov modelled distribution}. Furthermore, let $Z \in W^\alpha_p$ be the primitive of $\mathcal{R}f$, then $\mathbb{X}^{1,2}_{s,t} := Z_{s,t} - Y_sW_{s,t}$ for $s,t \in [0,1]$ satisfies that
  \begin{equation*}
    \int_0^1\int_0^1 \frac{|\mathbb{X}^{1,2}_{s,t}|^{p/2}}{|t-s|^{\alpha p + 1}} \dd s \dd t \lesssim 1.
  \end{equation*}
  Using the same way we can obtain other components $\mathbb{X}^{1,1}$, $\mathbb{X}^{2,1}$ and $\mathbb{X}^{2,2}$ such that $\X_{t} := (X_{0,t}, \mathbb{X}_{0,t} + F_{t})$ is a rough path in $W^\alpha_p(G^2(\R^2))$ over $X$, where $F^{i,j}_{t} = F^{j,i}_{t} = \frac{1}{2}(X^i_{0,t}X^j_{0,t} - \mathbb{X}^{i,j}_{0,t} - \mathbb{X}^{j,i}_{0,t})$ for $t \in [0,1]$ and $i,j=1,2$ with $X^1= Y$ and $X^2 = W$.
  
  Now we set $L(X) := \X$ and thus the map $L \colon W^\alpha_p([0,T];\R^d) \to W^\alpha_p([0,T];G^2(\R^d))$ is well-defined. It only remains to show that $L$ is locally Lipschitz continuous with respect to the metric $\rho_{W^{\alpha}_p}$.\smallskip
  
  \textit{Step~1:} Fix an $X = (Y,W)$ in $W^\alpha_p([0,T];\R^2)$ and let $\tilde{X} = (\tilde{Y},\tilde{W})$ be another element in $W^\alpha_p(\R^2)$. Let $\dot{\tilde{W}}$ be the derivative of $\tilde{W}$. We define a Sobolev model $(\tilde{\Pi}, \tilde{\Gamma})$ for the regularity structure $(\mathcal{A},\mathcal{T},\mathcal{G})$ given in Example~\ref{ex:basic Sobolev model} as following:
  \begin{equation*}
    \tilde{\Pi}_t(\dot{\mathbb{W}}) := \dot{\tilde{W}},   \quad    \tilde{\Pi}_t(\1) := 1 \in \R,
  \end{equation*}
  and $\tilde{\Gamma}_{s,t} = \text{Id}_{\Tc}$ for all $s,t \in \R$. Note that $(\tilde{\Pi}, \tilde{\Gamma})$ is the model used for constructing rough path lift over $\tilde{X}$. Hence, by defining $g(t) := \tilde{Y}_t \dot{\mathbb{W}}$, we have $g \in \tilde{D}^\gamma_{p,p}$, where $\tilde{D}^\gamma_{p,p}$ is the space of modelled distributions associated to $(\tilde{\Pi}, \tilde{\Gamma})$. Then, as we stated above, if $\tilde{\mathcal{R}}g$ is defined as in~\eqref{eq:reconstrcution operator for Besov model} by changing $f$ to $g$, $\Pi$ to $\tilde{\Pi}$ and using the same wavelet basis, its primitive $\tilde{Z} \in W^\alpha_p$ satisfies that $\tilde{Z}_{s,t} - \tilde{Y}_s\tilde{W}_{s,t} = \tilde{\mathbb{X}}^{1,2}_{s,t}$, where $\tilde{\mathbb{X}} = (\tilde{\mathbb{X}}^{i,j})_{i,j=1,2}$ is the second level component of $L(\tilde{X}) = \tilde{\X}$ up to an addition of the function $\tilde{F}$ which is the counterpart of the function $F$ defined as above with $\tilde{X}$ replacing $X$.\smallskip
  
  \textit{Step~2:} Next we will show that
  \begin{equation}\label{eq:estimates for comparing the second levels of two Besov rough paths}
    \Big(\int_0^1\int_0^1 \frac{|\mathbb{X}^{1,2}_{s,t} - \tilde{\mathbb{X}}^{1,2}_{s,t}|^{p/2}}{|t -s |^{\alpha p + 1}} \dd s \dd t\Big)^{\frac{2}{p}} \lesssim_{X,\tilde{X}}\| X - \tilde{X} \|_{W^\alpha_p}.
  \end{equation}
  To this end, first of all we note that in view of the definitions of $\mathcal{R}f$ and $\tilde{\mathcal{R}}g$ (see~\eqref{eq:reconstrcution operator for Besov model}), for every $n \ge 0$, $x \in \R$ and $y \in \Lambda_n$, it holds that 
  \begin{align*}
    &\langle \mathcal{R}f - \tilde{\mathcal{R}}g - \Pi_xf(x) + \tilde{\Pi}_xg(x), \psi^n_y \rangle \\
    & \qquad\quad = \int_{z \in B(y, 2^{-n})} 2^n\langle \Pi_y(f(z) - f(x) - g(z) + g(x)), \psi^n_y \rangle \dd z\\
    & \qquad\quad\quad + \int_{z \in B(y, 2^{-n})} 2^n\langle (\Pi_y - \tilde{\Pi}_y)(g(z) - g(x)), \psi^n_y \rangle \dd z.
  \end{align*}
  Then, by the definitions of the models $(\Pi,\Gamma)$ and $(\tilde{\Pi},\tilde{\Gamma})$ as well as the constructions of the modelled distributions~$f$ and~$g$, we have 
  \begin{equation*}
    \Pi_y(f(z) - f(x) - g(z) + g(x)) = (Y_z - Y_x - \tilde{Y}_z + \tilde{Y}_x) \dot{W}
  \end{equation*}
  and
  \begin{equation*}
    (\Pi_y - \tilde{\Pi}_y)(g(z) - g(x)) = (\tilde{Y}_z - \tilde{Y}_x)(\dot{W} - \dot{\tilde{W}}).
  \end{equation*}
  Hence, we obtain that
  \begin{align*}
    \langle \mathcal{R}f - \tilde{\mathcal{R}}g - \Pi_xf(x) + \tilde{\Pi}_xg(x), \psi^n_y \rangle
    = &\int_{z \in B(y, 2^{-n})} 2^n\langle (Y_z - Y_x - \tilde{Y}_z + \tilde{Y}_x) \dot{W}, \psi^n_y \rangle \dd z \\
    & \quad +  \int_{z \in B(y, 2^{-n})} 2^n\langle (\tilde{Y}_z - \tilde{Y}_x)(\dot{W} - \dot{\tilde{W}}), \psi^n_y \rangle \dd z.
  \end{align*}
  Then, following the arguments used in the proof of Lemma~\ref{lem:Bound in reconstruction theorem w.r.t. Sobolev model} we can derive that
  \begin{align*}
    &\left\lVert \Big\| \sup_{\eta \in \mathcal{B}^r} \frac{|\langle \mathcal{R}f - \tilde{\mathcal{R}}g - \Pi_xf(x) + \tilde{\Pi}_xg(x), \eta^\lambda_x \rangle|}{\lambda^\gamma} \Big\|_{L^{\frac{p}{2}}(\d x)} 
    \right\rVert_{L^{\frac{p}{2}}_\lambda} \\
    &\qquad \qquad\qquad\qquad \qquad\qquad\qquad \qquad\qquad
    \lesssim \|\Pi\|_{p}\vertiii{f - g}_{\gamma,p,p} + \|\Pi - \tilde{\Pi}\|_p\vertiii{g}_{\gamma,p,p}.
  \end{align*}
  Since $\|\Pi\|_p = \| \dot{W} \|_{W^{\alpha - 1}_p}$, $\|\Pi - \tilde{\Pi}\|_p = \|\dot{W} - \dot{\tilde{W}}\|_{W^{\alpha-1}_p}$, $\vertiii{g}_{\gamma,p,p} \lesssim \|\tilde{Y}\|_{W^\alpha_p}$ and $\vertiii{f - g}_{\gamma,p,p} \lesssim \|Y - \tilde{Y}\|_{W^\alpha_p}$, the above inequality can be written as
  \begin{align}\label{eq:First bound for two Besov rough paths}
  \begin{split}
    &\left\lVert \Big\| \sup_{\eta \in \mathcal{B}^r} \frac{|\langle \mathcal{R}f - \tilde{\mathcal{R}}g - \Pi_xf(x) + \tilde{\Pi}_xg(x), \eta^\lambda_x \rangle|}{\lambda^\gamma} \Big\|_{L^{\frac{p}{2}}(\d x)} \right\rVert_{L^{\frac{p}{2}}_\lambda}  \\
    &\qquad\qquad\qquad\lesssim \| \dot{W} \|_{W^{\alpha - 1}_p}\|Y - \tilde{Y}\|_{W^\alpha_p} + \|\dot{W} - \dot{\tilde{W}}\|_{W^{\alpha-1}_p}\|\tilde{Y}\|_{W^\alpha_p}  \\
    &\qquad\qquad\qquad%\lesssim \|W\|_{W^\alpha_p}\|Y - \tilde{Y}\|_{W^\alpha_p} + \|W - \tilde{W}\|_{W^\alpha_p}\|\tilde{Y}\|_{W^\alpha_p} 
    \lesssim_{X,\tilde{X}} \|X - \tilde{X}\|_{W^\alpha_p},
  \end{split}
  \end{align}
  where in the third line we used \cite[Theorem~2.3.8]{Triebel2010}.
  
  Now, invoking that $\mathcal{R}f - \tilde{\mathcal{R}}g - \Pi_xf(x) + \tilde{\Pi}_xg(x) = \dot{Z} - Y_x\dot{W} - (\dot{\tilde{Z}} - \tilde{Y}_x\dot{\tilde{W}})$, we can apply the same argument as for establishing~\eqref{eq:estimate from reconstruction theorem} to the Estimate~\eqref{eq:First bound for two Besov rough paths} to get that
  \begin{equation*}
    \Big(\int_0^1\int_0^1 \frac{|Z_{s,t} - Y_sW_{s,t} - (\tilde{Z}_{s,t} - \tilde{Y}_s\tilde{W}_{s,t})|^{\frac{p}{2}}}{|t-s|^{\alpha p +1}} \dd s \dd t \Big)^{\frac{2}{p}}\lesssim_{X,\tilde{X}} \|X - \tilde{X}\|_{W^\alpha_p}.
  \end{equation*}
  Since $Z_{s,t} - Y_sW_{s,t} = \mathbb{X}^{1,2}_{s,t}$ and $\tilde{Z}_{s,t} - \tilde{Y}_s\tilde{W}_{s,t} = \tilde{\mathbb{X}}_{s,t}$, Estimate~\eqref{eq:estimates for comparing the second levels of two Besov rough paths} has been established.\smallskip
  
  \textit{Step~3:} The estimate from Step~2 gives that
  \begin{equation*}
    \Big(\int_0^1\int_0^1 \frac{|\mathbb{X}_{s,t} - \tilde{\mathbb{X}}_{s,t}|^{p/2}}{|t -s |^{\alpha p + 1}} \dd s \dd t\Big)^{\frac{2}{p}} \lesssim_{X,\tilde{X}}\| X - \tilde{X} \|_{W^\alpha_p},
  \end{equation*}
  which in turn implies that the same bound also holds true for $\Big(\int_0^1\int_0^1 \frac{|F_{s,t} - \tilde{F}_{s,t}|^{p/2}}{|t -s |^{\alpha p + 1}} \dd s \dd t\Big)^{\frac{2}{p}}$ by invoking the definitions of $F$ and $\tilde{F}$. Hence, noting that 
  \begin{equation*}
    \pi_2(\X_{s,t} - \tilde{\X}_{s,t}) = \mathbb{X}_{s,t} - \tilde{\mathbb{X}}_{s,t} + F_{s,t} - \tilde{F}_{s,t}
  \end{equation*}
  we can deduce that
  \begin{equation*}
    \rho^{(2)}_{W^{\alpha}_p}(L(X),L(\tilde{X})) = \Big(\int_0^1\int_0^1 \frac{|\mathbb{X}_{s,t} - \tilde{\mathbb{X}}_{s,t} - F_{s,t} + \tilde{F}_{s,t}|^{p/2}}{|t -s |^{\alpha p + 1}} \dd s \dd t\Big)^{\frac{2}{p}} \lesssim_{X,\tilde{X}}\| X - \tilde{X} \|_{W^\alpha_p}.
  \end{equation*}
  Since $\rho^{(1)}_{W^{\alpha}_p}(L(X),L(\tilde{X})) = \| X - \tilde{X} \|_{W^\alpha_p}$, we finally obtain that 
  \begin{equation*}
    \rho_{W^{\alpha}_p}(L(X),L(\tilde{X})) \lesssim_{X,\tilde{X}} \| X - \tilde{X} \|_{W^\alpha_p}.
  \end{equation*}
\end{proof}

\begin{remark}
  While the proofs of Lemma~\ref{lem:regularity of Rf} and \ref{lem:Bound in reconstruction theorem w.r.t. Sobolev model} and Theorem~\ref{thm:continuity of Sobolev rough path lift} contain basically all the necessary ideas to prove the reconstruction theorem for modelled distributions of negative Sobolev regularity and Sobolev models (cf. \cite[Theorem~3.10]{Hairer2014} and \cite[Theorem~2.11]{Liu2016}), we decided not to set up the general theorem for two reasons: to prove Lyons--Victoir extension theorem is currently its main application and other applications might require a different definition of Sobolev type models, cf. Remark~\ref{rmk:discussion Sobolev model}.

  On the other hand, one can of course prove the above reconstruction theorem without using the notations and notions from regularity structure. Here we still decide to illustrate the proof in terms of the language of regularity structure mainly because of a pedagogical purpose: as the construction of rough path lift over H{\"o}lder continuous paths via regularity structure is well--known, we believe it is easy for those readers who are familiar with regularity structure theory to understand our approach and realize the difference between the classical H{\"o}lder case and the current Sobolev setting.
\end{remark}

\begin{remark}
  In general the rough path lift obtained in Theorem~\ref{thm:continuity of Sobolev rough path lift} does not coincide with the rough path lift defined by Riemann--Stieltjes integration \emph{even} in the case of sufficiently regular $\R^d$-valued paths. This is due to the continuity assertion in the Theorem: if the lift coincided on piecewise affine curves (which do have Sobolev regularity) with the standard lift defined by Riemann--Stieltjes integration, then -- by continuity -- it would coincide on limits of such curves. This would yield in particular a rough path lift of H{\"o}lder curves of order $\alpha < 1/2$ continuous with respect to the H{\"o}lder norm and extending classical lifts, which is known to be impossible. However, there exists a class of rough differential equations where the solutions depend only on the driving $\R^d$-valued paths and not on their rough path lifts, see \cite[Section~6]{Lyons2007a}. For these rough differential equations the associated It{\^o}--Lyons map depends, in a meaningful way, continuously only on the $\R^d$-valued driving paths due to continuous lifting map~$L$ provided in Theorem~\ref{thm:continuity of Sobolev rough path lift}.
\end{remark}

%\bibliography{quellen}{}
%\bibliographystyle{amsalpha}

\end{document}